\title{Transversal Homotopy Theory}
\author{Jon Woolf}
\date{October 2009}
\newtheorem{theorem}{Theorem}[section]
\newtheorem{proposition}[theorem]{Proposition}
\newtheorem{corollary}[theorem]{Corollary}
\newtheorem{lemma}[theorem]{Lemma}
\theoremstyle{definition}
\newtheorem{remark}[theorem]{Remark}
\newtheorem{remarks}[theorem]{Remarks}
\newcommand{\defn}[1]{\emph{#1}}
\newcommand{\ie}{i.e.\ }
\newcommand{\bpt}{{\star}}
\newcommand{\N}{\mathbb{N}}
\newcommand{\Z}{\mathbb{Z}}
\newcommand{\R}{\mathbb{R}}
\renewcommand{\P}{\mathbb{P}}
\newcommand{\cat}[1]{\mathbf{#1}}
\newcommand{\sns}{\cat{Whit}}
\newcommand{\bsns}{\cat{Whit}_\bpt}
\newcommand{\mfld}{\cat{Mfld}}
\newcommand{\bmfld}{\cat{Mfld}_\bpt}
\newcommand{\grp}{\cat{Grp}}
\newcommand{\mon}{\cat{Mon}}
\newcommand{\id}{id}
\newcommand{\colim}{\mathrm{colim}\, }
\newcommand{\psidot}[2]{\psi_{#1}\left({#2}\right)} 
\newcommand{\Psidot}[2]{{\Psi}_{#1}\left({#2}\right)}
\newcommand{\tang}[4]{ {}^{#1}_{#2} \cat{Tang}^{#3}_{#4} } 
\newcommand{\frtang}[3]{ \tang{ {\rm fr} }{#1}{#2}{#3} }
\newcommand{\trans}[2]{ \left[ #1, #2\right]_+} 
\newcommand{\fatwedge}{\mbox{\footnotesize $\mathbb{V}$}}
\newcommand{\fatsusp}{\mathbb{S}}
\newcommand{\fatthom}{\mathbb{TH}}
\begin{document}

\maketitle

\section{Introduction}

This paper sets out the beginnings of a `transversal homotopy theory' of smooth stratified spaces. The idea is to mimic the constructions of homotopy theory but using only those smooth maps which are transversal to each stratum of some fixed stratification of the space. This lovely idea is due to John Baez and James Dolan; I learnt about it from the discussion \cite{Baez:kx}, which not only lays out the basic idea but also the key examples. For an informal introduction containing all the intuition, it is well worth looking at this discussion. The aim of this paper is to provide the technical backbone for the ideas expressed there. 

The essence of the theory can be understood by considering this example: Suppose a path in a manifold crosses a codimension one stratum $S$ transversally, and then turns around and crosses back. Clearly, this path is homotopic to one which does not cross $S$ at all. However, at some point in the homotopy --- at the point where the path is `pulled off' $S$ --- we must pass through a path which is not transversal to $S$. Therefore, if we insist that the equivalence on paths is homotopy through transversal paths, we obtain a theory in which crossings cannot be cancelled --- the class of a path now remembers its interactions with $S$. It is very important that the notion of homotopy used here is that of a family of transversal maps and {\em not} that of a transversal family of maps. In the latter case we would recover the usual homotopy theory (because any continuous map of Whitney stratified spaces can be approximated by a smooth transversal map). 

Before giving any further details we sketch the `big picture'. A topological space $X$ has an associated {\em fundamental $\infty$-groupoid} $\Pi X$. One model for $\Pi X$ is the singular simplicial set of $X$ (which is a Kan complex and so can be viewed as an $\infty$-groupoid), but we use a cubical model instead, in which $d$-morphisms are maps $[0,1]^d \to X$. In order to obtain more tractable invariants one restricts attention to a range $k\leq d \leq n+k$ of dimensions. For this to make good sense one imposes boundary conditions --- the first $k$ boundaries of each map should map to a chosen basepoint in $X$ --- and considers the maps $[0,1]^{n+k}\to X$ up to homotopy. The resulting invariant $\Pi_{k,n+k}X$ is expected to be a `$k$-tuply monoidal $n$-groupoid'. The simplest and most familiar case is when $n=0$ and we obtain the homotopy groups $\Pi_{k,k}X=\pi_kX$. The next simplest is the fundamental groupoid, $\Pi_{0,1}X$ in this notation, and its higher dimensional analogues. 

The homotopy theory of sufficiently nice spaces, e.g. CW complexes, is equivalent to the theory of $\infty$-groupoids.  Do other categorical structures have an analogous spatial interpretation? Weakening the groupoid condition a little one obtains (higher) categories with duals. Two key examples of this weaker structure are the category of Hilbert spaces, in which each morphism has an adjoint, and cobordism categories, in which the dual is provided by `turning a cobordism upside down'. Baez and Dolan's insight is that transversal homotopy theory should associate categories with duals to stratified spaces just as homotopy theory associates groupoids to spaces. Much of this paper is devoted to showing that this is true, at least for the analogues of $\Pi_{k,k}X$ and $\Pi_{k,k+1}X$,  when $X$ is a Whitney stratified manifold. However, it is important to note that not every category with duals arises in this way; to obtain a correspondence one would require considerably more general objects on the geometric side.  

In slightly more detail the contents are as follows. \S\ref{whitney} and \ref{transversality} provide a resum\'e of Whitney stratified manifolds and transversal maps between them. The only novelty is the notion of a stratified normal submersion. These maps preserve transversality under post-composition. The category $\sns$ of Whitney stratified manifolds and stratified normal submersions contains the category $\mfld$ of smooth manifolds and smooth maps as a full-subcategory. 

Some basic constructions within $\sns$, and its based cousin $\bsns$, are discussed in \S\ref{category whit}, namely fibred products, coproducts, suspensions and Thom spaces. To be more precise, coproducts, suspensions and Thom spaces only exist up to homotopy in $\bsns$. This is because the usual constructions produce non-manifolds, and so we make a unique-up-to-homotopy  choice of `fattening' of these to obtain objects of $\bsns$. Alternatively, one could enlarge the category to include more general stratified spaces, but it turns out that the `fattened' versions behave better with respect to transversal homotopy theory. The missing construction is that of mapping spaces, in particular loop spaces. To have these, one would need to enlarge $\bsns$ to include `$\infty$-dimensional Whitney stratified manifolds'. Section \ref{htpy thru transversal} contains some brief but important remarks and results about homotopies through transversal maps. 

In \S \ref{monoids}  the `transversal homotopy monoids' of a Whitney stratified space are defined. For $n>0$ the $n$th transversal homotopy monoid is a functor $$\psidot{n}{-} : \bsns \to \mon$$  valued in the category of dagger monoids. When $n=0$ there is a similar functor valued in the category of pointed sets. These functors generalise the usual homotopy groups in the sense that there are commutative diagrams
$$
\xymatrix{
\bmfld \ar@{_{(}->}[d] \ar[r]^{\pi_n} & \grp \ar@{_{(}->}[d] \\
\bsns  \ar[r]_{\psi_n} & \mon.
}
$$
We use $\psi$ because it is reminiscent of the symbol for a transversal intersection.

In \S\ref{ex1} we consider the example of transversal homotopy monoids of the space $\mathbb{S}^k$ obtained by stratifiying $S^k$ by a point and its complement. These play a central r\^ole because the transversal homotopy monoids of wedges of spheres can be organised into an operad for which all transversal homotopy monoids are algebras. The Pontrjagin--Thom construction provides a geometric interpretation of $\psidot{n}{\mathbb{S}^k}$ as the codimension $k$ framed tangles in $n$ dimensions, \ie the set of ambient isotopy classes of codimension $k$ submanifolds of $\R^n$. 

We briefly discuss the behaviour of transversal homotopy monoids under products in \S\ref{products}. In  \S\ref{first monoid} we gather together some observations and results about the first homotopy monoid $\psidot{1}{X}$. This has a combinatorial description, due to Alexey Gorinov, in terms of loops in a certain labelled graph. 

In \S \ref{transversal homotopy categories} we discuss the analogues of the fundamental groupoid and the higher homotopy groupoids. These are functors which assign a category $\Psidot{n,n+1}{X}$ to each Whitney stratified manifold $X$. As an example, we show in \S\ref{ex2} that $\Psidot{n,n+1}{\mathbb{S}^k}$ is the category of  category of codimension $k$ framed tangles in dimensions $n$ and $n+1$, \ie the category consisting of closed submanifolds of $\R^n$ and (ambient isotopy classes of) bordisms in $\R^{n+1}$ between them. These `transversal homotopy categories' have a rich structure: they are rigid monoidal dagger categories  which are braided, in fact ribbon, for $n\geq 2$ and symmetric for $n\geq 3$. This structure arises by considering them as `algebras' for the transversal homotopy categories of wedges of spheres (see Theorem \ref{structure thm}).

In \S \ref{thom} we sketch out the generalisation from spheres to other Thom spectra. In \S\ref{tangle hypothesis} we briefly discuss the relation of transversal homotopy theory to the Tangle Hypothesis of Baez and Dolan.  

Appendix \ref{collapse maps} contains some technical details about `collapse maps', which are key to  the Pontrjagin--Thom construction. This material is well-known, but we add a few refinements necessary for our setting. 

\subsection*{Acknowledgments}

I am in great debt to John Baez, and to all those who contributed to the discussion on the $n$-category caf\'e, for the idea of studying transversal homotopy theory. I would like to thank Alexey Gorinov and Conor Smyth for many helpful discussions. This work is funded, very generously, by the Leverhulme Trust (grant reference: F/00 025/AH).

\section{Preliminaries}
\label{prelim}

\subsection{Whitney stratified spaces}
\label{whitney}

A \defn{stratification} of a smooth manifold $X$ is a  decomposition $X = \bigcup_{i\in \mathcal{S}} S_i$ into disjoint subsets $S_i$ indexed by a partially-ordered set $\mathcal{S}$ such that
\begin{enumerate}
\item the decomposition is locally-finite,
\item $S_i \cap \overline{S_j} \neq \emptyset \iff S_i \subset \overline{S_j}$, and this occurs precisely when $i \leq j$ in $\mathcal{S}$,
\item each $S_i$ is a locally-closed smooth connected submanifold of $X$.
\end{enumerate}
The $S_i$ are referred to as the \defn{strata} and the partially-ordered set $\mathcal{S}$ as the \defn{poset of strata}. The third condition is usually called the \defn{frontier condition}. 

Nothing has been said about how the strata fit together from the point of view of smooth geometry. In order to obtain a class of stratified spaces with which we can do differential geometry we need to impose further conditions, proposed by  Whitney \cite{MR0188486} following earlier ideas of Thom \cite{MR0239613}. Suppose $x\in S_i \subset \overline{S_j}$ and that we have sequences $(x_k)$ in $S_i$ and $(y_k)$ in $S_j$ converging to $x$. Furthermore, suppose that the secant lines $\overline{x_ky_k}$ converge to a line $L\leq  T_xX$ and the tangent planes $T_{y_k}S_j$ converge to a plane $P \leq T_xX$. (An intrinsic definition of the limit of secant lines can be obtained by taking the limit of $(x_k,y_k)$ in the blow-up of $X^2$ along the diagonal, see \cite[\S4]{Mather:1970fk}. The limit of tangent planes is defined in the Grassmannian $Gr_d(TX)$ where $d=\dim S_j$. The limiting plane $P$ is referred to as a generalised tangent space at $x$.) In this situation we require
 \begin{description}
\item[(Whitney A)] the tangent plane $T_xS_i$ is a subspace of the limiting plane $P$;
\item[(Whitney B)] the limiting secant line $L$ is a subspace of the limiting plane $P$.
\end{description}
Mather \cite[Proposition 2.4]{Mather:1970fk} showed that the second Whitney condition implies the first. It remains useful to state both because the first is often what one uses in applications, but the second is necessary  to ensure that the normal structure to a stratum is locally topologically trivial, see for example \cite[1.4]{GoM2}.

A \defn{Whitney stratified manifold} is a manifold with a stratification satisfying the Whitney B condition. A 
\defn{Whitney stratified space} $W\subset X$ is a closed union $W$ of strata in a Whitney stratified manifold $X$.  Examples abound, for instance any manifold with the \defn{trivial stratification} which has only one stratum is a Whitney stratified manifold. More interestingly, any complex analytic variety admits a Whitney stratification \cite{MR0188486}, indeed any (real or complex) subanalytic set of an analytic manifold admits a Whitney stratification \cite{MR0377101,H}.

\subsection{Transversality}
\label{transversality}

A smooth map $f:M \to Y$ from a manifold $M$ to a Whitney stratified manifold $Y$ is \defn{transversal} if for each $p\in M$ the composite
$$
T_pM \stackrel{df}{\longrightarrow} T_{fp}Y \to T_{fp}Y / T_{fp}B = N_{fp}B
$$
is surjective. Here $B$ is the stratum of $Y$ containing $f(p)$ and $NB$ is the normal bundle of $B$ in $Y$. More generally, a smooth map $f:X \to Y$ of Whitney stratified manifolds is \defn{transversal} if the restriction of $f$ to each stratum of $X$ is transversal in the above sense. 

\begin{remark}
If $f:X \to Y$ is not transversal then it cannot be made so by refining either the stratifications of $X$ or $Y$ or both. Any map to a manifold with the trivial stratification is transversal.
\end{remark}

We equip the space $C^\infty(X,Y)$ of smooth maps from a manifold $X$ to a manifold $Y$ with the Whitney topology which has basis of open sets given by the subsets
$$
\{ f\in C^\infty(X,Y) \ | \ j^kf(X) \subset U \} \qquad 1\leq k < \infty
$$
for an open subset $U$ of the bundle $J^k(X,Y) \to X \times Y$ of $k$-jets. 
\begin{theorem} \cite{MR520929}
The set of transversal maps is open and dense in $C^\infty(X,Y)$.
\end{theorem}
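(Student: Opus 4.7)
The plan is to prove openness and density separately, with openness hinging crucially on Whitney~(A) and density reducing to the classical (unstratified) Thom transversality theorem via a Baire category argument.

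For openness, I would work in the $1$-jet bundle $J^1(X,Y)$ and identify the ``non-transversal locus'' $\mathcal{N}$: a jet $j$ over $(p,y) \in X \times Y$ lies in $\mathcal{N}$ if $y$ belongs to some stratum $S_i$ and the induced linear composite $T_pX \to T_yY \to N_yS_i$ fails to be surjective. The key step is to show that $\mathcal{N}$ is closed. Given a convergent sequence $j^1f_k(p_k) \to j^1 f(p)$ with $f(p) \in S_i$ and each $f_k(p_k)$ in some stratum, local finiteness of the stratification lets me pass to a subsequence in which all $f_k(p_k)$ lie in a fixed stratum $S_j$ with $S_i \subset \overline{S_j}$ (using the frontier condition). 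Passing to a further subsequence so that the tangent planes $T_{f_k(p_k)} S_j$ converge in the Grassmannian, Whitney (A) ensures the limiting plane $P$ contains $T_{f(p)} S_i$; non-surjectivity of each $df_k(p_k)$ onto $N_{f_k(p_k)} S_j$ therefore passes to non-surjectivity of $df(p)$ onto $N_{f(p)} S_i$. Continuity of the jet map $f \mapsto j^1 f$ from the Whitney topology to the $C^0$-topology on sections of $J^1(X,Y)$ then yields openness of the transversal maps.

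For density, I would invoke the classical Thom transversality theorem stratum by stratum. For each stratum $S_i$ of $Y$, the set $T_i$ of smooth maps whose restriction to every stratum of $X$ is transversal (in the usual manifold sense) to $S_i$ is open and dense in the Whitney topology on $C^\infty(X,Y)$. Because the stratification of $Y$ is locally finite and $Y$ is second countable, there are at most countably many strata, and the full set of transversal maps is $\bigcap_i T_i$. The Whitney topology makes $C^\infty(X,Y)$ into a Baire space, so this countable intersection of open dense sets remains dense.

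The main obstacle is the openness argument: Whitney (A) is needed precisely because small perturbations of $f$ can move points of $X$ between strata of $Y$, and it is not \emph{a priori} clear that the transversality condition survives such motion. The remark earlier in the paper --- that non-transversality cannot be repaired by refining the stratifications --- reflects exactly this subtlety. Once openness is secured, the density half is a fairly standard marriage of Thom's theorem with Baire category.
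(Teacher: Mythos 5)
Your proposal is essentially sound, but note that the paper does not prove this theorem at all: it is quoted from Trotman's paper [MR520929], with only two remarks on the structure of the proof --- that openness uses Whitney~(A) essentially (indeed openness holds if and only if both stratifications are (A)-regular), and that density follows from the parametric transversality theorem (Theorem~\ref{transversality theorem} in the paper) together with Sard's theorem, as in \cite[\S1.3]{GoM2}. Your openness argument, via closedness of the non-transversal locus $\mathcal{N}$ in $J^1(X,Y)$ and the definition of the Whitney topology by jet conditions, is precisely the argument of the cited source, and you correctly locate the role of condition~(A). For density you take a slightly different route: stratum-by-stratum application of the classical Thom transversality theorem followed by a Baire category argument, rather than building a single transversal family $f:X\times P\to Y$ and applying Theorem~\ref{transversality theorem} and Sard to it. The two are equivalent in substance (the classical theorem is itself proved by the parametric method), so nothing is lost; the paper's route has the advantage of reusing a result it needs anyway for Proposition~\ref{deformation}.

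Two small repairs are needed. First, your closedness argument for $\mathcal{N}$ only treats motion of the image points $f_k(p_k)$ between strata of $Y$; since transversality here means transversality of the restriction of $f$ to each stratum of $X$, the source points $p_k$ may also migrate between strata of $X$, and you must invoke Whitney~(A) for the stratification of $X$ as well, so that the limiting plane of the $T_{p_k}A_k$ contains $T_pA$ and non-surjectivity of $df_k(p_k)(T_{p_k}A_k)+T_{f_k(p_k)}S_j$ passes to the limit. Second, the individual sets $T_i$ need not be \emph{open} --- transversality to a single locally closed, non-closed stratum is not an open condition --- but each $T_i$ is residual by the classical theorem, and residuality is all the Baire argument requires; only the full intersection $\bigcap_i T_i$ is open, and that is exactly what the Whitney~(A) argument establishes.
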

\noindent The proof of this result uses the Whitney A condition in an essential way. In fact, the set of transversal maps is open if, and only if, the stratifications of $X$ and $Y$ satisfy Whitney A \cite{MR520929}. The proof that it is dense is a corollary of the following result and Sard's theorem, see for example \cite[\S1.3]{GoM2}.

\begin{theorem}
\label{transversality theorem}
Let $X$ and $Y$ be Whitney stratified manifolds, and $P$ a manifold. Suppose $f:X\times P \to Y$ is a transversal map with respect to the product stratification of $X\times P$ and given stratification of $Y$. Then the map $f_p = f|_{X \times\{p\}}$ is transversal for $p\in P$  if and only if $p$ is a regular value of the composite$$f^{-1}(B) \hookrightarrow X \times P \stackrel{\pi}{\longrightarrow} P$$ for each stratum $B$ of $Y$.
\end{theorem}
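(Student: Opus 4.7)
The plan is to reduce the theorem to pointwise linear algebra at each pair of strata. Fix a stratum $A$ of $X$, a stratum $B$ of $Y$, and a point $(q,p) \in (A\times P) \cap f^{-1}(B)$. Set $V = T_q A$, $W = T_p P$, $N = N_{f(q,p)} B$, and write $\alpha : V \oplus W \to N$ for the composition of $df$ with the quotient $T_{f(q,p)} Y \to N$. Transversality of $f$ for the stratum $A \times P$ of the product stratification is precisely the statement that $\alpha$ is surjective, while transversality of $f_p$ at $q$ for the stratum $A$ says that $\alpha|_{V}$ is surjective.

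Next I would unpack the regular-value hypothesis. Because $\alpha$ is surjective, $f|_{A\times P}$ meets $B$ transversally in the usual manifold sense, so $f^{-1}(B)\cap (A\times P)$ is a smooth submanifold of $A\times P$ near $(q,p)$ with tangent space $\ker(\alpha)$. The differential at $(q,p)$ of
$$ f^{-1}(B)\cap(A\times P) \hookrightarrow A\times P \stackrel{\pi}{\to} P $$
is therefore the restriction of the second projection $V \oplus W \to W$ to $\ker(\alpha)$. So the condition that $p$ is a regular value of this composite amounts, at $(q,p)$, to the map $\ker(\alpha) \to W$ being surjective.

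The remaining content is a short linear-algebra lemma: given a surjection $\alpha : V \oplus W \to N$, the restriction $\alpha|_V$ is surjective if and only if the projection $\ker(\alpha) \to W$ is surjective. For the forward direction, any $w \in W$ admits $v \in V$ with $\alpha(v,0) = -\alpha(0,w)$, so $(v,w)\in\ker(\alpha)$ projects to $w$. Conversely, any $n\in N$ decomposes as $n = \alpha(v_0,w_0)$ by surjectivity of $\alpha$, and choosing $(v_1,w_0)\in\ker(\alpha)$ yields $n = \alpha(v_0-v_1,0) \in \im(\alpha|_V)$. Quantifying this equivalence over all strata $A, B$ and all points $(q,p)\in (A\times P) \cap f^{-1}(B)$ proves the theorem (with the case where $p\notin \pi(f^{-1}(B))$ being vacuous on both sides).

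The main obstacle is not really mathematical but bookkeeping: one must parse the two hypotheses correctly on the stratified objects, and in particular confirm that transversality of $f$ with respect to the product stratification forces each restriction $f|_{A\times P}$ to meet $B$ transversally as a map of manifolds, so that the implicit-function description of $f^{-1}(B)\cap(A\times P)$ and of its tangent space as $\ker(\alpha)$ is genuinely available. Once that is in place, the equivalence is entirely formal.
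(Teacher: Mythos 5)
Your proposal is correct and follows essentially the same route as the paper: the paper's proof simply records the crux — that $(x,p)$ is a regular point iff $T_{(x,p)}f^{-1}B + T_{(x,p)}(X\times\{p\}) = T_{(x,p)}(X\times P)$ — and refers to the literature for details, while your linear-algebra lemma ($\alpha|_V$ surjective iff $\ker(\alpha)\to W$ surjective, for surjective $\alpha:V\oplus W\to N$) is exactly that crux written out stratum by stratum. No gaps; you have just supplied the details the paper delegates to its reference.
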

\begin{proof}
This is a standard result; the crux of the argument is that $(x,p)$ is a regular point if and only if
$$
T_{(x,p)}f^{-1}B + T_{(x,p)}(X\times \{p\}) = T_{(x,p)}(X \times P).
$$
The details can be found, for example, in \cite[Chapter 2]{MR0348781}.
\end{proof}

\begin{proposition}\cite[\S1.3]{GoM2}
\label{whitney transversality}
If $f: X \to Y$ is transversal then the decomposition of $X$ by subsets $A \cap f^{-1}(B)$ where $A$ and $B$ are strata of $X$ and $Y$ respectively is a Whitney stratification. We refer to this stratification as the \defn{stratification induced by $f$} and, when we wish to emphasise it, denote $X$ with this refined stratification by $X_f$.
\end{proposition}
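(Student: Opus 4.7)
The plan is to verify in turn the three stratification axioms and then the Whitney B condition for the decomposition by pieces $A \cap f^{-1}(B)$.

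For the stratification axioms: since $f$ is transversal, each restriction $f|_A : A \to Y$ is transversal to every stratum $B$ of $Y$, so $A \cap f^{-1}(B) = (f|_A)^{-1}(B)$ is a locally-closed smooth submanifold of $A$ of codimension $\mathrm{codim}_Y B$ by the standard preimage theorem; taking connected components yields the candidate strata. Local finiteness is immediate from continuity of $f$ and local finiteness of the given stratifications. The frontier condition reduces, via the frontier conditions for $X$ and $Y$ together with local topological triviality of Whitney stratifications near a stratum, to the observation that in any neighbourhood of a point of $A \cap f^{-1}(B)$ the piece $A' \cap f^{-1}(B')$ does accumulate whenever $A \leq A'$ and $B \leq B'$ — the transversality of $f$ ensures this intersection is nonempty on arbitrarily small neighbourhoods.

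For Whitney B, which is the bulk of the content, I would take $x \in A \cap f^{-1}(B)$ together with sequences $x_k \to x$ in $A \cap f^{-1}(B)$ and $y_k \to x$ in $A' \cap f^{-1}(B')$. After passing to subsequences we may assume the secant lines $\overline{x_k y_k}$ converge to a line $L \leq T_xX$ and that $T_{y_k}(A' \cap f^{-1}(B'))$, $T_{y_k}A'$ and $T_{f(y_k)}B'$ converge respectively to planes $P$, $P_X$, $P_Y$. The key claim is that
$$
P \;=\; P_X \cap (df_x)^{-1}(P_Y).
$$
The inclusion $\subseteq$ follows from the pointwise identity $T_y(A' \cap f^{-1}(B')) = T_y A' \cap (df_y)^{-1}(T_{f(y)}B')$, itself a consequence of transversality, combined with smoothness of $f$. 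The reverse inclusion is forced by a dimension count: both sides have dimension $\dim A' - \mathrm{codim}_Y B'$, again using transversality of $f|_{A'}$.

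It then remains to show $L \leq P_X \cap (df_x)^{-1}(P_Y)$. Whitney B for the pair $(A,A')$ in $X$ gives $L \leq P_X$ directly. For the other half I would apply Whitney B for $(B,B')$ in $Y$ to the image sequences $f(x_k), f(y_k) \to f(x)$: unless $f(x_k)=f(y_k)$ for infinitely many $k$ (a trivial case in which $df_x$ vanishes on $L$), the secant lines $\overline{f(x_k)f(y_k)}$ converge along a further subsequence to $df_x(L)$, whence Whitney B delivers $df_x(L) \leq P_Y$. Combining yields $L \leq P$, as required. I expect the main obstacle to be this last step — carefully identifying the limit of the pushed-forward secant lines so that Whitney B in $Y$ may be invoked, together with the dimension count underpinning the identity for $P$ — both of which rely essentially on transversality of $f$.
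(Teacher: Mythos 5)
First, a point of reference: the paper does not prove Proposition~\ref{whitney transversality} at all --- it is quoted from \cite[\S1.3]{GoM2} --- so there is no in-paper argument to compare yours against, and your outline has to stand on its own. The overall strategy (identify the limiting plane as $P = P_X \cap (df_x)^{-1}(P_Y)$ and check that $L$ lands in each factor) is the standard and correct one. But there is a genuine gap at the crux, namely the ``reverse inclusion by dimension count''. One always has $\dim\bigl(P_X \cap (df_x)^{-1}(P_Y)\bigr) = \dim A' - \dim\bigl((df_x(P_X)+P_Y)/P_Y\bigr) \geq \dim A' - \mathrm{codim}_Y B'$, with equality --- which is what you need, since $P$ already sits inside this space with exactly that dimension --- if and only if $df_x(P_X) + P_Y = T_{f(x)}Y$. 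This limiting transversality does \emph{not} follow from transversality of $f|_{A'}$ to $B'$ at the points $y_k$: spanning is not a closed condition in the Grassmannian (two distinct lines spanning $\R^2$ can converge to a single line), so transversality along the sequence can degenerate at the limit. What rescues the count is Whitney condition A (available by Mather's result that B implies A, as the paper notes): condition A for the pair $(A,A')$ in $X$ gives $T_xA \subseteq P_X$, condition A for $(B,B')$ in $Y$ gives $T_{f(x)}B \subseteq P_Y$, and then transversality of $f|_A$ to $B$ at the \emph{limit} point $x$ yields $df_x(P_X)+P_Y \supseteq df_x(T_xA)+T_{f(x)}B = T_{f(x)}Y$. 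Your argument never invokes condition A, and this is precisely the place where it is indispensable.

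Two further points. Your case analysis for the pushed-forward secant lines is incomplete: if $df_x$ annihilates $L$ but $f(x_k)\neq f(y_k)$, the image secant lines converge (after Taylor-expanding $f$ at $x$) to a line governed by higher-order terms, not to ``$df_x(L)$''; the conclusion survives only because in that case $df_x(L)=0\subseteq P_Y$ holds trivially, but you should separate the cases on whether $df_x|_L$ vanishes rather than on whether $f(x_k)=f(y_k)$. And the frontier condition is waved at rather than proved: the assertion that $A'\cap f^{-1}(B')$ accumulates at every point of $A\cap f^{-1}(B)$ whenever $A\leq A'$ and $B\leq B'$ is itself the nontrivial content there, and establishing it requires the same kind of limiting-transversality-plus-condition-A argument (or the local structure theory of Whitney stratifications); it is not immediate from transversality of $f$ alone.
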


One easy consequence is that the product $X \times Y$ of Whitney stratified manifolds equipped with the \defn{product stratification}, whose strata are the products $A \times B$ of strata $A\subset X$ and $B\subset Y$, is a Whitney stratified manifold. Unless otherwise stated the product will always be equipped with this stratification. 

The composite of transversal maps need not be transversal (for example consider $\R \hookrightarrow \R^2 \hookrightarrow \R^3$ where $\R$ and $\R^2$ are trivially stratified and $\R^3$ has a one-dimensional stratum intersecting the image of $\R$). We now identify a class of maps which preserve transversality under composition, and which are themselves closed under composition.

A smooth map $g:X\to Y$ of Whitney stratified manifolds is \defn{stratified} if for any stratum $B$ of $Y$ the inverse image $g^{-1}B$ is a union of strata of $X$. We say it is a \defn{stratified submersion} if for each stratum $B$ of $Y$ and stratum $A \subset g^{-1}B$ the restriction $g|_A : A \to B$ is a submersion. Alternatively, we say it is a \defn{stratified normal submersion} if the induced map $N_xA \to N_{gx}B$ of normal spaces is surjective. 
\begin{lemma}
\label{gen comp}
Suppose $X,Y$ and $Z$ are Whitney stratified manifolds and $f:X\to Y$ and $g:Y\to Z$ are smooth. If $f:X \to Y$ is a stratified submersion then the composite $g\circ f : X \to Z$ is transversal whenever $g:Y \to Z$ is transversal. If $g:Y \to Z$ is a stratified normal submersion then the composite $g\circ f : X \to Z$ is transversal whenever $f:X \to Y$ is transversal.
\end{lemma}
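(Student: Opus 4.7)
My plan is to prove both statements by a diagram chase on tangent and normal spaces, applied one stratum at a time.

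Fix a stratum $A$ of $X$ and a point $p \in A$. In both cases we need to show that the composite
$$T_p A \xrightarrow{\,d(gf)\,} T_{gfp} Z \longrightarrow N_{gfp} C$$
is surjective, where $C$ is the stratum of $Z$ containing $gfp$. Because both hypotheses include the map being stratified, we may let $B$ be the stratum of $Y$ containing $fp$ (so $f(A) \subset B$), and in case (b) we have $g(B) \subset C$ as well.

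For case (a), since $f$ is a stratified submersion, $f|_A \colon A \to B$ is a submersion, so $df|_A \colon T_p A \to T_{fp} B$ is surjective. Because $gf|_A$ factors as $g|_B \circ f|_A$, the composite we care about factors as
$$T_p A \xrightarrow{df|_A} T_{fp} B \longrightarrow T_{gfp} Z \longrightarrow N_{gfp} C.$$
Transversality of $g$ applied to its restriction to the stratum $B$ at the point $fp$ says precisely that the right-hand portion is surjective, and composing with a surjection preserves surjectivity.

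For case (b), since $g$ is stratified, $dg$ carries $T_{fp} B$ into $T_{gfp} C$ and so induces a map $\overline{dg} \colon N_{fp} B \to N_{gfp} C$; by hypothesis this induced map is surjective. The composite $T_{fp} Y \to T_{gfp} Z \to N_{gfp} C$ therefore factors as $T_{fp} Y \to N_{fp} B \xrightarrow{\overline{dg}} N_{gfp} C$. Plugging this in gives
$$T_p A \xrightarrow{df|_A} T_{fp} Y \longrightarrow N_{fp} B \xrightarrow{\overline{dg}} N_{gfp} C,$$
in which the first two arrows compose to a surjection by transversality of $f$ along $A$, and the last arrow is surjective by the stratified normal submersion hypothesis.

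The only genuine content is verifying that the maps on normal spaces are well defined and that the two factorisations above are valid; once the relevant commutative square between $T_{fp} B \hookrightarrow T_{fp} Y$ and their images in $T_{gfp} Z$ modulo $T_{gfp} C$ is in place, each case is a one-line composition of surjections. I expect no real obstacle beyond keeping the bookkeeping of strata $A$, $B$, $C$ straight and being careful that in case (a) we do not need $g$ itself to be stratified, only its restriction $g|_B$ to be transversal at the single point $fp$.
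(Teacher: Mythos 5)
Your proof is correct and takes essentially the same route as the paper: the paper's argument is the identical diagram chase, producing the surjection $T_pA \to T_{fp}B$ followed by transversality of $g|_B$ in case (a), and leaving case (b) --- the factorisation through the induced surjection $N_{fp}B \to N_{gfp}C$ precomposed with transversality of $f$ --- as ``similar'', which you spell out. One tiny caveat: in case (b) your parenthetical claim $f(A)\subset B$ need not hold (there $f$ is only transversal, not stratified), but you never actually use it, so the argument stands.
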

\begin{proof}
Take a point $x$ in a stratum $A$ of $X$. Let $B$ be the stratum of $Y$ containing $f(x)$ and $C$ the stratum of $Z$ containing $g\circ f (x)$. Consider the diagram:
\[
\xymatrix{
T_xA \ar[r]\ar@{-->}[d]_{\alpha} & T_{x}X \ar[d]_{df}  & \\
T_{fx} B \ar[r]  & T_{fx}Y \ar[d]^{dg} &  \\
 & T_{gfx}Z  \ar[r]& N_{gfx}C.
}
\]
When $f$ is a stratified submersion there is a unique surjection $\alpha$ making the top left square commute. If $g$ is transversal the composite $T_{fx}B \to N_{gfx}C$ is surjective. Hence so is that $T_{x}A \to N_{gfx}C$ and so $g\circ f$ is transversal too.

The proof of the second part is similar.
\end{proof}
There is a partial converse: if $f$ is stratified and $g\circ f$ is transversal whenever $g$ is transversal then $f$ is necessarily a stratified submersion. To see this, take $g$ to be  the identity map on $Y$ but where the target is stratified by a small normal disk to the stratum at $f(x)$ and its complement (the normal disk has two strata, the interior and its boundary). This $g$ is transversal and the condition that $g\circ f$ is transversal at $x$ is equivalent to the statement that $f$ is a stratified submersion at $x$. Similarly if $g$ is stratified and $g\circ f$ is transversal whenever $f$ is transversal then $g$ is necessarily a stratified normal submersion. In this case, take $f$ to be the inclusion of a small normal disk to a stratum at the point $y\in Y$. This is transversal and the fact that $g\circ f$ is transversal at $y$ is equivalent to the statement that $g$ is a stratified normal submersion at $y$. 

\begin{lemma}
If $f:X \to Y$ is a transversal map of Whitney stratified manifolds then $f$ is a stratified normal submersion with respect to the stratification of $X$ induced by $f$ and the given stratification of $Y$. 
Conversely, a stratified normal submersion $f:X\to Y$ becomes a transversal map if we forget the stratification of $X$, \ie give $X$ the trivial stratification. 
\end{lemma}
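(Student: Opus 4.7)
The plan is to handle the two directions separately, with the forward direction being the substantive one and the converse a quick consequence of the definitions.

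For the forward direction, suppose $f:X\to Y$ is transversal and equip $X$ with the stratification $X_f$ from Proposition \ref{whitney transversality}, whose strata are the pieces $A'=A\cap f^{-1}(B)$ for $A$ a stratum of $X$ and $B$ a stratum of $Y$. First I would check that $f:X_f\to Y$ is stratified in the sense of the paper: for any stratum $B\subset Y$, the preimage $f^{-1}(B)=\bigcup_{A} A\cap f^{-1}(B)$ is visibly a union of strata of $X_f$, and $f(A')\subseteq B$ by construction. The heart of the argument is then to verify the normal-surjection condition at a point $x\in A'$. Since $f|_A:A\to Y$ is transversal, $A\cap f^{-1}(B)$ is a submanifold of $A$ with $T_x A' = (df|_A)^{-1}(T_{fx}B)$, so $df$ descends to a well-defined linear map $N_xA' = T_xX/T_xA' \to T_{fx}Y/T_{fx}B = N_{fx}B$. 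Given a class $[w]\in N_{fx}B$, transversality of $f|_A$ produces a vector $v\in T_xA\subset T_xX$ with $df(v)\equiv w \pmod{T_{fx}B}$, and the image of $v$ in $N_xA'$ maps to $[w]$. This establishes the first half.

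For the converse, let $f:X\to Y$ be a stratified normal submersion and write $X'$ for $X$ with its trivial stratification. Given $p\in X'$, let $A$ be the stratum of the original $X$ containing $p$ and $B$ the stratum of $Y$ containing $f(p)$. The map $T_pX'\to N_{fp}B$ required by transversality is precisely the composite of the quotient $T_pX\twoheadrightarrow T_pX/T_pA = N_pA$ with the normal map $N_pA\to N_{fp}B$; the first is surjective tautologically and the second by hypothesis, hence so is the composite.

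\textbf{Main obstacle.} The only real subtlety is correctly identifying $T_xA'$ with $(df|_A)^{-1}(T_{fx}B)$ inside $T_xA$; this is where transversality of $f|_A$ is used, and without it the quotient $T_xX/T_xA'$ would not even be the right target. Once this identification is in hand, both directions reduce to tracing through the relevant quotient diagrams.
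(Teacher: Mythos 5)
Your argument is correct and follows essentially the same route as the paper: for the forward direction the paper likewise uses that $df\colon T_pA\to N_{f(p)}B$ is surjective and that $T_p(A\cap f^{-1}B)$ lies in its kernel, so the induced map on normal spaces is surjective, and it dismisses the converse as clear (your factorisation through $N_pA$ makes that explicit). The only cosmetic remark is that for well-definedness you need only the containment $T_xA'\subseteq (df|_A)^{-1}(T_{fx}B)$, not the equality you highlight as the main obstacle, though the equality is of course true.
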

\begin{proof}
By construction $f$ is stratified after we refine the stratification of $X$ so that the strata are of the form $A\cap f^{-1}B$ where $A$ and $B$ are strata of $X$ and $Y$ respectively. Suppose $p \in A \cap f^{-1}B$. Since $f$ is transversal $df: T_pA \to N_{f(p)}B$ is surjective, and since $T_p(A \cap f^{-1}B)$ is in the kernel, $N_p(A \cap f^{-1}B) \to N_{f(p)} B$
is also surjective. The second statement is clear. 
\end{proof}

\subsection{A category of Whitney stratified manifolds}
\label{category whit}

The identity map of a Whitney stratified manifold is a stratified normal submersion, and the composite of two stratified normal submersions is a stratified normal submersion. It follows that there is a category $\sns$ whose objects are Whitney stratified manifolds and whose morphisms are stratified normal submersions.  When $Y$ is trivially stratified with only one stratum then any smooth map $X \to Y$ is a stratified normal submersion. So $\sns$ contains the category of smooth manifolds and smooth maps as a full subcategory. (There is also a category of Whitney stratified manifolds and stratified submersions, but this does not contain the category of manifolds and smooth maps as a full subcategory.)

There are evident notions of homotopy and homotopy equivalence in $\sns$ given by the usual definitions with the additional requirement that all maps should be stratified normal submersions. For example a homotopy is a stratified normal submersion $X \times [0,1] \stackrel{h}{\longrightarrow} Y$
with the property that each slice $h_t: X \times \{t\} \to Y$ is also a stratified normal submersion and so on.

In the remainder of this section we describe some basic constructions in the category $\sns$, and also in the based analogue $\bsns$. The basepoint is always assumed to be generic, \ie it lies in an open stratum, equivalently the inclusion map is transversal. 

\subsubsection*{Fibred products}

Both $\sns$ and $\bsns$ have all finite products --- these are given by the Cartesian product of the underlying manifolds equipped with the product stratification. We also have fibred products 
$$
\xymatrix{
X\times_Z Y \ar[r] \ar[d] & Y \ar[d]^g \\
X \ar[r]_f & Z
}
$$
whenever $f$ and $g$ are {\em transversal} to one another in $\sns$ (or in $\bsns$). By this we mean that for every pair  $(A,B)$ of strata of $X$ and $Y$ respectively, the restrictions $f|_A:A \to Z$ and $g|_B:B \to Z$ are transversal in the usual sense of smooth maps of manifolds. This is equivalent to requiring that $f\times g: X \times Y \to Z^2$ is transversal with respect to the stratification of $Z^2$ by the diagonal $\Delta$ and its complement, \ie that 
$$
d(f\times g)T(A \times B) +T\Delta = TZ^2
$$
for every $A$ and $B$. In this situation Proposition \ref{whitney transversality} shows that $(f\times g)^{-1}\Delta$ is a Whitney stratified submanifold of $X \times Y$. The fibre product is defined to be
$$
X\times_Z Y = (f\times g)^{-1}\Delta
$$
(with the above stratification) and the maps to $X$ and $Y$ are given by the inclusion into $X\times Y$ followed by the projections. The inclusion is a stratified normal submersion and therefore so are the maps to $X$ and $Y$.

Note that the stratification of $Z$ does not explicitly appear in this discussion. The previous notion of a transversal map $f:X \to Y$ of Whitney stratified manifolds corresponds to the special case when $f$ is transversal (in the above sense) to the map $g:Y\to Z$ which forgets the stratification of $Y$, \ie $Z$ is the underlying manifold of $Y$ with the trivial stratification and $g$ the identity. The fibre product in this case is $X$ equipped with the induced stratification.

\subsubsection*{Coproducts}

The category $\sns$  has finite coproducts given by disjoint union. However $\bsns$ does not have coproducts because the wedge sum $X \vee Y$ is not in general a manifold. To avoid this difficulty we could enlarge the category so that it contained all Whitney stratified spaces. However, there would still be a problem in that the basepoint of $X \vee Y$ is not generic, and the inclusions of $X$ and $Y$ in $X \vee Y$ do not satisfy any reasonable extension of the notion of stratified normal submersion at this point. This is one of several similar situations in which it seems better, for the purposes of transversal homotopy theory, to modify a construction so that we remain within $\bsns$. 
\begin{lemma}
\label{fattening}
Suppose $W$ is a Whitney stratified space with only isolated singularities. Let $S$ be the set of these singularities and further suppose that  there is an open neighbourhood $U$ of $S$ such that $U-S$ is contained in the union of the open strata of $W$. Then we can choose a `fattening'  of $W$ which is a Whitney stratified manifold $\widetilde{W}$ with smooth maps 
$$
(W,U) \stackrel{\imath}{\longrightarrow} (\widetilde{W}, p^{-1}U)  \stackrel{p}{\longrightarrow} (W,U)
$$
such that there is a smooth homotopy $p\circ \imath \simeq \id_W$ relative to $W-U$, and a smooth homotopy $\imath\circ p \simeq \id_{\widetilde{W}}$ in $\sns$. The construction of such a fattening depends on  the ambient manifold in which $W$ is embedded and several other choices, but the resulting Whitney stratified manifold $\widetilde{W}$  is unique up to homotopy equivalence in $\sns$.
\end{lemma}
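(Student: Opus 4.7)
The plan is to build $\widetilde{W}$ as a small open neighbourhood of $W$ inside the ambient Whitney stratified manifold $X$ in which $W$ sits as a closed union of strata. First, equip $X$ with a Thom--Mather compatible system of control data: for each stratum $A$, a tubular neighbourhood $T_A$, a smooth projection $\pi_A : T_A \to A$, and a tubular distance $\rho_A : T_A \to [0,\infty)$ with $\rho_A^{-1}(0) = A$, satisfying $\pi_A \circ \pi_B = \pi_A$ and $\rho_A \circ \pi_B = \rho_A$ on $T_A\cap T_B$ whenever $A < B$. For a sufficiently small positive function $\epsilon_A$ on each stratum $A \subset W$, define
\[
\widetilde{W} \;=\; \bigcup_{A \subset W} \{\, x \in T_A : \rho_A(x) < \epsilon_A(\pi_A x)\,\} \;\subset\; X.
\]
As an open subset of $X$ this is a Whitney stratified manifold with the inherited stratification. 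The inclusion $\imath : W \hookrightarrow \widetilde{W}$ is tautological, and the retraction $p : \widetilde{W} \to W$ is assembled from the $\pi_A$'s using the compatibility relations, which ensure that $p$ is well-defined and a stratified normal submersion: on strata of $\widetilde{W}$ lying in $W$ it is the identity, while on higher-dimensional strata lying in some $T_A$ it agrees with $\pi_A$, whose normal derivative is surjective by construction of the tubular neighbourhood.

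For the stratified homotopy $\imath \circ p \simeq \id_{\widetilde{W}}$ in $\sns$, take $H_t(x)$ to rescale the tubular coordinate $\rho_A$ by a factor $t$ along the fibre of $\pi_A$ through $x$, so that $H_0 = \imath\circ p$ and $H_1 = \id$. Each slice is a stratified normal submersion because radial rescaling along the tubular fibres preserves the normal submersion condition. For the smooth homotopy $p \circ \imath \simeq \id_W$ relative to $W - U$, arrange the construction of $p$ using a bump function supported in $U$ so that $p$ is the identity on $W - U$; within $U$, where the stratification of $W$ reduces to the open strata together with the isolated points $S$, the composition $p \circ \imath$ either fixes points or collapses a neighbourhood of $s$ onto $s$, and a standard radial contraction within $U$ gives the required relative homotopy.

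For uniqueness, given two fattenings $\widetilde{W}_1, \widetilde{W}_2$ one can realise both as open neighbourhoods of $W$ in a common ambient (applying Whitney's embedding theorem if they arise from different ambients). Their intersection contains a common fattening $\widetilde{W}_3$ onto which each $\widetilde{W}_i$ retracts via a stratified normal submersion $q_i : \widetilde{W}_i \to \widetilde{W}_3$, which is itself a homotopy equivalence in $\sns$ by the first part of the argument; chaining the $q_i$ with a homotopy inverse produces a homotopy equivalence $\widetilde{W}_1 \simeq \widetilde{W}_2$ in $\sns$. The hard part is verifying that $p$ and the slices $H_t$ are genuinely stratified normal submersions uniformly across overlaps of the tubes $T_A$: this is exactly what the Thom--Mather compatibility relations buy us, and once they are in hand the rest is routine stratification-theoretic assembly.
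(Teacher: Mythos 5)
There is a genuine gap, and it lies in the stratification you put on $\widetilde{W}$. You take $\widetilde{W}$ to be an open neighbourhood of $W$ in the ambient manifold ``with the inherited stratification'', and your description of $p$ (``on strata of $\widetilde{W}$ lying in $W$ it is the identity'') confirms that the strata of $W$ --- including the isolated singular points $S$, and all positive-codimension strata in their original dimensions --- remain strata of $\widetilde{W}$. This fails on two counts. First, it defeats the purpose of the lemma: in the application to the fat wedge the singular basepoint must become \emph{generic} in $\widetilde{W}$, which it cannot be if it is still a point stratum. Second, and fatally for the statement itself, with this stratification $\imath\circ p$ is not even a stratified map: the preimage under $p$ of a stratum $A\subset W$ is the tube around $A$, which is not a union of inherited strata (the top inherited stratum is all of $\widetilde{W}-W$, not the individual tubes), so no homotopy $\imath\circ p\simeq \id_{\widetilde{W}}$ in $\sns$ can exist. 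The essential move is to \emph{re}-stratify the neighbourhood: take a tubular neighbourhood $\pi:N\to W-S$ of the smooth manifold $W-S$ together with small balls $B$ about the points of $S$, and declare the positive-codimension strata of $\widetilde{W}=N\cup B$ to be the preimages $\pi^{-1}(A)$ of the positive-codimension strata $A$ of $W-S$ (these have the same codimension in $\widetilde{W}$ as $A$ has in $W$), absorbing $B$, the singular points and the preimages of the open strata into the open stratum. The hypothesis that $U-S$ meets only open strata is exactly what keeps the $\pi^{-1}(A)$ away from $B$ so that this is a Whitney stratification.

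Two smaller points. Invoking full Thom--Mather control data is both more than you need and less than you deliver: the retraction assembled from the $\pi_A$'s via the compatibility relations is in general only continuous across strata, whereas the lemma demands a \emph{smooth} $p$; in this isolated-singularity setting one gets smoothness by patching inward radial vector fields on $N$ and on $B$ with a partition of unity and taking the time-one flow. And for uniqueness you do not need a common ambient or a common sub-fattening: given two fattenings $(\widetilde{W},\imath,p)$ and $(\widetilde{W}',\imath',p')$, the maps $\imath'\circ p$ and $\imath\circ p'$ are already mutually inverse homotopy equivalences in $\sns$, using the homotopies $\imath\circ p\simeq\id$ and $p\circ\imath\simeq\id_W$ supplied by the first part.
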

\begin{proof}
Suppose $M\supset W$ is (a choice of) ambient manifold for $W$. Let $\pi:N \to W-S$ be a tubular neighbourhood of $W-S$ in $M$ and let $B$ be the union of disjoint open balls about each singularity such that $B\cap W \subset U$. Let $\widetilde{W} = N \cup B$ stratified by the pre-images under $\pi$ of the strictly positive codimension strata in $W-S$ and (the connected components of) the union of $B$ with the pre-image of the open strata. 

Let $\imath:W \to \widetilde{W}$ be the inclusion. To define $p$, choose inward-pointing (in the normal direction) radial vector fields on $N$ and on $B$ and patch them using a partition of unity. Rescale so that the flow at time $1$ smoothly maps $B\cup N$ onto the subspace $W$, and let this map be $p$. These maps satisfy the stated properties. If $(\widetilde{W}, \imath,p)$ and $(\widetilde{W}', \imath',p')$ are two choices of fattening, then the maps
$$
\widetilde{W} \stackrel{\imath'\circ p}{\longrightarrow} \widetilde{W}' \stackrel{\imath\circ p'}{\longrightarrow} \widetilde{W}
$$
give a homotopy equivalence in $\sns$. 
\end{proof}

Define the `fat wedge' of Whitney stratified manifolds $X$ and $Y$ in $\bsns$ to be the fattening 
$$
X \fatwedge Y = \widetilde{X \vee Y}
$$
of the wedge product (with the usual basepoint, which becomes generic in the fattening). The conditions of the lemma are satisfied because the basepoints of $X$ and $Y$ are generic. See Figure \ref{fat circle wedge} for an example. Given maps $f:X \to Z$ and $g:Y\to Z$ in $\bsns$, the diagram
$$
\xymatrix{
& Y \ar@{_{(}->}[d] \ar[ddr]^g & \\
X  \ar@{^{(}->}[r] \ar[drr]_f &
X \fatwedge  Y \ar@{-->}[dr] & \\
&& Z
}
$$
commutes up to homotopy in $\bsns$ where the dotted arrow is the composite 
$$
X \fatwedge Y \stackrel{p}{\longrightarrow} X \vee Y \stackrel{f\vee g}{\longrightarrow} Z.
$$  
If we fix a choice of fattening $X \fatwedge Y$ for each $X$ and $Y$ then we can define
$$
f\fatwedge g = \imath \circ (f\vee g) \circ p
$$
for maps $f:X \to X'$ and $g:Y \to Y'$. This is a stratified normal submersion, and we obtain a functor taking values in the homotopy category of $\bsns$ (whose objects are the same as those of $\bsns$, but whose morphisms are homotopy classes of maps in $\bsns$). Different choices of fattening lead to naturally isomorphic functors.

\begin{figure}
\centerline {
\includegraphics[width=3in]{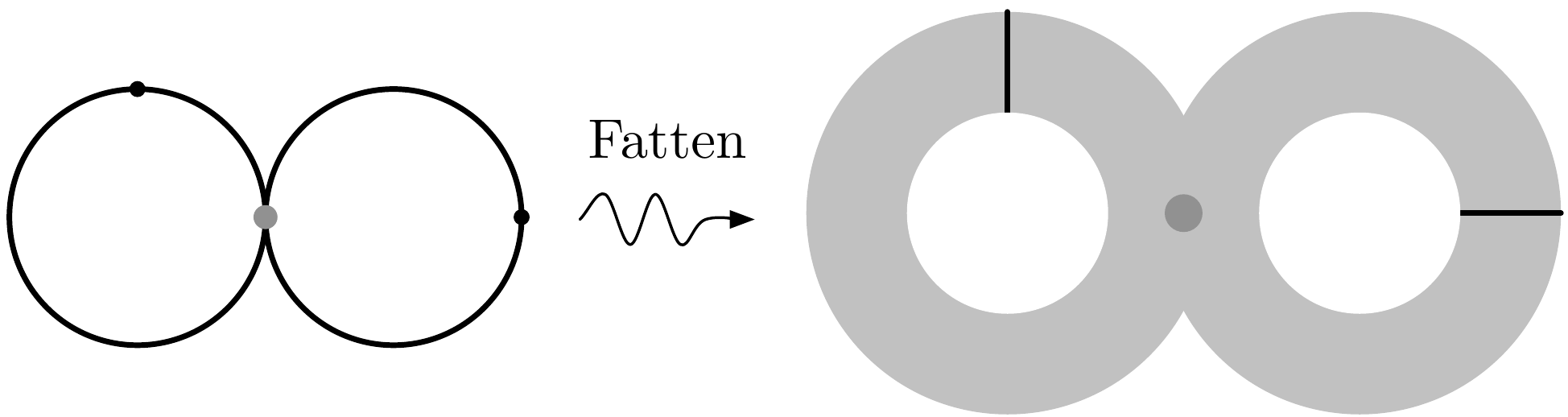}
}
\caption{The wedge (on the left) and the fat wedge (on the right) of two circles stratified by a point and its complement. The grey dots represent basepoints, the black dots and lines represent strata. We could equally well embed $S^1\vee S^1$ in $\R^3$ and obtain a solid double torus, stratified by two disks and their complement, as the fattening (and similarly for higher dimensional $\R^n$).}
\label{fat circle wedge}
\end{figure}

\subsubsection*{Suspensions}

A similar approach can be used to define suspensions. Suppose $X$ is a Whitney stratified manifold. Let $SX = X\times [-1,1]/ X \times\{\pm 1\}$ be the usual suspension. Stratify it by the positive codimension strata of $X$, thought of as the subspace $X\times\{0\}$, the two suspension points (which are singular in general) and the remainder. This is a Whitney stratified space satisfying the conditions of the lemma, so we can define the `fat suspension' 
$$
\fatsusp X = \widetilde{S X}.
$$ 
This is compatible with our notation for the sphere stratified by a point and its complement in the sense that $\fatsusp \mathbb{S}^n = \mathbb{S}^{n+1}$. (More precisely they are homotopy equivalent, but since the fat suspension is only defined up to homotopy equivalence in $\bsns$ we are free to choose the representative.)

The construction is functorial: if $f: X \to Y$ is a map in $\sns$ then define 
$$
\fatsusp f = \imath \circ Sf \circ p : \fatsusp X \to \fatsusp Y.
$$
This yields a suspension functor valued in the homotopy category, which is well-defined up to natural isomorphism. There is also a based version for $\bsns$ in which the basepoint of $\fatsusp X$ is the old basepoint, thought of as lying in $X\times\{0\}$. 

\subsubsection*{Thom spaces}

Let  $E \to B$ be a real vector bundle with structure group $O(n)$, with unit disk bundle $D(E)$ and sphere bundle $S(E)$. We stratify the Thom space
$$
{\rm TH}(E) = D(E)/S(E)
$$
by the zero section $B$, the singular point `at infinity' and the remainder. This satisfies the conditions of the above lemma and so we define the `fat Thom space' 
$$
\fatthom(E) = \widetilde{ {\rm TH}(E) }
$$
with the point at infinity, which is now generic, as basepoint. If $f: E \to g^*E'$ is a bundle map which is an orthogonal isomorphism on each fibre then
$$
\fatthom(f) = \imath\circ {\rm TH}(f) \circ p: \fatthom(E) \to \fatthom(E')
$$
 is a based stratified normal submersion. This gives a functor from bundles and bundle maps, satisfying the above conditions, to the homotopy category of $\bsns$. To repeat the mantra one last time; different choices of fattening yield naturally isomorphic functors.

\subsection{Homotopy through transversal maps}
\label{htpy thru transversal}

Suppose $X$ is a Whitney stratified manifold. Stratify $X\times [0,1]$ as a product. A \defn{homotopy through transversal maps} to a Whitney stratified manifold $Y$ is a smooth map $h:X\times [0,1] \to Y$ such that each slice
$$
h_t = h( \cdot , t): X \to Y
$$
is transversal. We insist that
\begin{equation}
\label{h boundary conditions}
h_t = \left\{
\begin{array}{ll}
h_0 & t\in [0,\epsilon)\\
h_1 & t\in [1-\epsilon,1]
\end{array}
\right.
\end{equation}
for some $\epsilon >0$. Homotopy through transversal maps is an equivalence relation on transversal maps from $X$ to $Y$. Every homotopy $h:X\times[0,1] \to Y$ through transversal maps is also a transversal map but not vice versa.

The following result is a cornerstone of the theory of stratified spaces, introduced by Thom in \cite{MR0239613} and proved by Mather in the notes \cite{Mather:1970fk}. 
\begin{theorem}[Thom's first isotopy lemma]
\label{TFIL}
Suppose $X$ is a Whitney stratified subset of a manifold $M$ and $f: M \to \R^n$ a smooth map whose restriction to $X$ is proper and a stratified submersion. Then there is a commutative diagram
$$
\xymatrix{ 
X \ar[r]^{h\qquad\qquad} \ar[dr]_f & (f^{-1}(0) \cap X) \ar[d]^{\pi}  \times  \R^n\\
& \R^n
}
$$
in which $h$ is a stratified homeomorphism (\ie a continuous stratified map with continuous stratified inverse) whose restriction to each stratum is smooth. 
\end{theorem}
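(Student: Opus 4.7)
The plan is to follow the classical strategy of Thom and Mather: reduce to the case $n=1$ and construct a ``controlled'' stratified vector field on $X$ lifting $\partial/\partial t$ on $\R$, then obtain $h$ by integrating its flow. The reduction is straightforward: if we can handle $n=1$, we apply it iteratively to $f_1, f_2, \ldots, f_n$ (the coordinate functions), using that at each stage properness and the stratified submersion property are preserved when we restrict to the zero set of the previous coordinate. The product structure across $\R^n$ then assembles from the one-parameter flows, which commute because the lifting vector fields will be constructed independently in each coordinate direction.

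The heart of the matter is therefore the $n=1$ case. Here I would first equip $X$ with a system of \emph{control data} in the sense of Mather: for each stratum $S$ an open tubular neighbourhood $T_S \subset M$ with smooth retraction $\pi_S : T_S \to S$ and tubular distance $\rho_S : T_S \to [0,\infty)$, chosen so that on overlaps the compatibility relations $\pi_S \circ \pi_{S'} = \pi_S$ and $\rho_S \circ \pi_{S'} = \rho_S$ hold on the appropriate locus. The existence of such compatible control data on a Whitney stratified space is where Whitney condition~B enters crucially, since it ensures that tubular neighbourhoods of lower strata can be chosen compatible with tubular neighbourhoods of higher strata. A stratified vector field $v$ on $X$ will be called \emph{controlled} if, on each $T_S$, both $d\pi_S(v) = v|_S \circ \pi_S$ and $d\rho_S(v) = 0$; the second condition is what will force integral curves from different strata to remain at bounded distance, giving continuity of the flow.

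Next I would construct a controlled vector field $v$ on $X$ with $df(v) = 1$ by induction on the depth of the stratification (equivalently, on strata ordered by the frontier relation, starting from the minimal strata). On a minimal stratum $S_0$ of $X$, the restriction $f|_{S_0}$ is a smooth submersion to $\R$, so one picks any smooth vector field $v_0$ on $S_0$ with $df(v_0) = 1$. Having defined $v$ on the closed union of strata up to some depth, one extends to the next stratum $S$ by first lifting inside $S$ (using that $f|_S$ is a submersion) and then modifying on $T_S \cap S$ near the already-defined part using a partition of unity to enforce both $d\pi_S(v) = v|_S \circ \pi_S$ and $d\rho_S(v)=0$; the compatibility of the control data makes these conditions consistent.

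Finally, integrate $v$. The properness of $f|_X$ ensures integral curves are defined for all time, and the controlled condition $d\rho_S(v) = 0$ together with $d\pi_S(v) = v|_S \circ \pi_S$ guarantees that integral curves in nearby strata project correctly and stay at constant tubular distance, which is exactly what is needed to deduce \emph{continuity} of the resulting flow $\Phi_t : X \to X$ across strata (though smoothness only holds stratum-wise). The required homeomorphism is then $h(x) = (\Phi_{-f(x)}(x), f(x))$. The main obstacle, and by far the most delicate step, is the continuity of the flow at points where integral curves approach a lower-dimensional stratum: this is precisely the point where Whitney~B (through the compatibility of control data and the $d\rho_S(v) = 0$ condition) rescues us from the discontinuities that a naive stratum-by-stratum integration would produce.
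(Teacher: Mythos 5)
The paper does not prove this theorem at all --- it is quoted verbatim from Mather's notes \cite{Mather:1970fk} --- and your sketch is precisely the Thom--Mather argument via compatible control data, controlled lifts of $\partial/\partial t$, and integration of the resulting stratified flow that the cited source carries out, so it is the ``same approach'' in the only meaningful sense. The outline is correct as a sketch; the two points to be aware of are that the reduction to $n=1$ is standardly done by the iterative restriction to zero sets that you describe (the parenthetical appeal to commuting flows is neither automatic nor needed), and that the continuity of the controlled flow across strata, which you rightly single out as the delicate step, is where essentially all of the technical work in Mather's proof resides.
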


\begin{proposition}
\label{deformation}
Suppose $X$ is a compact Whitney stratified space and $h:X\times [0,1] \to Y$ a homotopy through transversal maps. Then $X_{h_0}$ can be continuously deformed into $X_{h_1}$, \ie there is a continuous isotopy from the identity on $X$  to a stratified homeomorphism  $X_{h_0} \to X_{h_1}$. 
\end{proposition}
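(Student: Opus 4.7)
The plan is to apply Thom's first isotopy lemma (Theorem \ref{TFIL}) to the projection $\pi : X \times [0,1] \to [0,1]$, using the Whitney stratification of $X \times [0,1]$ induced by $h$.

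Since $h$ itself is a transversal map (this is observed just before the statement), Proposition \ref{whitney transversality} furnishes a Whitney stratification $(X\times [0,1])_h$ whose strata have the form $(A \times I) \cap h^{-1}(B)$, with $A$ a stratum of $X$, $I$ a stratum of $[0,1]$, and $B$ a stratum of $Y$. Using the boundary conditions \eqref{h boundary conditions} I may extend $h$ constantly to a slightly larger open interval, so as to work on a genuine open manifold containing the compact subset $X \times [0,1]$. To invoke Theorem \ref{TFIL} I need $\pi$ restricted to $X\times [0,1]$ to be proper and a stratified submersion with respect to this refined stratification; properness is immediate from compactness of $X$.

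The heart of the proof is verifying the stratified submersion property, and this is the step I expect to be the main obstacle. Fix $(x,t)$ in a stratum $C = (A \times I) \cap h^{-1}(B)$. Its tangent space is the kernel of the composite
$$
T_x A \oplus T_t I \stackrel{dh}{\longrightarrow} T_{h(x,t)}Y \longrightarrow N_{h(x,t)}B.
$$
Because the slice $h_t$ is transversal, the restriction $d(h_t)_x : T_x A \to N_{h(x,t)}B$ is already surjective, so I can choose $v \in T_x A$ with $d(h_t)_x(v) + \partial_t h \in T_{h(x,t)}B$. Then $(v, 1)$ lies in $T_{(x,t)}C$ and projects to $1 \in T_t I$ under $d\pi$, so $\pi|_C$ is a submersion. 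This is precisely the point at which it is essential that each individual slice $h_t$ is transversal, not merely that $h$ is a transversal family.

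Given these hypotheses, Theorem \ref{TFIL} produces a stratified homeomorphism $\Phi : X \times [0,1] \to X_{h_0} \times [0,1]$ commuting with projection to $[0,1]$. Writing $\Phi^{-1}(x,t) = (\psi_t(x), t)$ and replacing $\psi_t$ by $\psi_t \circ \psi_0^{-1}$ so that $\psi_0 = \id_X$, the family $\{\psi_t\}_{t \in [0,1]}$ is the required continuous isotopy from the identity on $X$ to a stratified homeomorphism $X_{h_0} \to X_{h_1}$.
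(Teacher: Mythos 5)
Your proof is correct and follows essentially the same route as the paper: extend $h$ past the endpoints, refine $X\times[0,1]$ by the stratification induced by $h$, show the projection to the interval is a proper stratified submersion, apply Thom's first isotopy lemma, and normalise the resulting trivialisation so that it is the identity at $t=0$. The only cosmetic difference is that you verify the stratified submersion condition by a direct tangent-space computation, whereas the paper cites Theorem \ref{transversality theorem}, whose proof is exactly that computation.
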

\begin{proof}
For some $\epsilon >0$, we can extend the homotopy to a smooth map
$h:X\times (-\epsilon, 1+\epsilon) \to Y$ which is still transversal on each slice. Theorem \ref{transversality theorem} tells us that the projection $$\pi :  \left(X \times (-\epsilon,1+\epsilon) \right)_h \to (-\epsilon,1+\epsilon)$$ is a proper stratified submersion. Hence by Thom's first isotopy lemma there is a stratified homeomorphism 
$q:\left(X \times (-\epsilon,1+\epsilon)\right)_h\to  X_{h_0} \times (-\epsilon,1+\epsilon)$
such that $\pi\circ q =\pi$. Let $q_t$ denote the restriction  $\pi_X \circ q(-,t) :  X \to X$ of $q$ to the slice labelled by $t$. Then the composite
$$
\xymatrix{
X \times [0,1] \ar[r]^{q_0 \times \id} & X \times [0,1] \ar[r]^{q^{-1}} & X \times [0,1] \ar[r]^{\qquad \pi_X} & X
}
$$
is the desired continuous isotopy from the identity to a stratified homeomorphism $X_{h_0} \to X_{h_1}$. 
\end{proof}
\begin{remark}
\label{smooth deformation}
In special cases $X_{h_0}$ can be {\em smoothly} deformed into $X_{h_1}$. For instance this is so when the stratification of $Y$ has strata $Y_i-Y_{i-1}$ for $i=0,\ldots, k$ where 
$$
\emptyset = Y_{-1} \subset Y_0 \subset \cdots \subset Y_{k-1} \subset Y_k =Y
$$
is a filtration by closed submanifolds. The proof is the same except that this condition ensures that the induced stratification of $X \times (-\epsilon,1+\epsilon)$ and the projection to $(-\epsilon,1+\epsilon)$ satisfy the hypotheses of Proposition \ref{smooth TFIL} below, which we therefore use in place of Thom's first isotopy lemma.
\end{remark}

\begin{proposition}
\label{smooth TFIL}
Let $X$ be a Whitney stratified subset of a manifold $M$ with strata $S_0,\ldots,S_k$ such that $\overline{S_i} = S_0\cup  \cdots\cup S_i$ is a smooth submanifold of $M$. Suppose $f:M \to \R$ is a smooth map whose restriction to $X$ is a proper stratified submersion. Then there is a commutative diagram
$$
\xymatrix{ 
X \ar[r]^{h\qquad\qquad} \ar[dr]_f & (f^{-1}(0) \cap X) \ar[d]^{\pi}  \times  \R\\
& \R
}
$$
in which $h$ is a stratified diffeomorphism. 
\end{proposition}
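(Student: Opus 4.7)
The plan is to mimic the proof of Thom's first isotopy lemma by integrating a vector field lifting $\partial/\partial t$, but to exploit the filtration-by-smooth-submanifolds hypothesis to obtain a \emph{globally smooth} vector field on $X$, rather than one that is only continuous across strata. Concretely, the goal is to construct a smooth vector field $V$ on $X=\overline{S_k}$ (a smooth submanifold of $M$) which is tangent to each $\overline{S_i}$ and satisfies $df(V)=1$; the flow of $V$ then provides $h$.

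Given such a $V$, the flow $\phi_s$ exists for all time because $f|_X$ is proper and the identity $f\circ\phi_s=f+s$ confines each trajectory to a compact subset over any bounded time interval. Tangency to every $\overline{S_i}$ means $\phi_s$ preserves each closed stratum, hence also each open stratum $S_i=\overline{S_i}\setminus\overline{S_{i-1}}$. Setting $h(x)=(\phi_{-f(x)}(x),f(x))$ yields a smooth map whose smooth inverse is $(y,t)\mapsto\phi_t(y)$, and $\pi\circ h=f$ by construction, so $h$ is the required stratified diffeomorphism.

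The main work is building $V$. At each $x\in S_j$ the set $A_x=\{v\in T_x\overline{S_j}\mid df_x(v)=1\}$ is a non-empty affine subspace: $S_j$ is open in $\overline{S_j}$ so $T_xS_j=T_x\overline{S_j}$, and $f|_{S_j}$ is a submersion. I construct $V$ inductively along the filtration. Choose any smooth section $V_0$ of the affine bundle $\{df=1\}\subset T\overline{S_0}$ by the standard convex-fiber partition-of-unity argument. Supposing inductively that $V_i$ is a smooth vector field on $\overline{S_i}$ tangent to each $\overline{S_j}$, $j\le i$, with $df(V_i)=1$, pick any smooth section $W$ of $\{df=1\}\subset T\overline{S_{i+1}}$. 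Viewing $V_i$ as a section of $T\overline{S_{i+1}}|_{\overline{S_i}}$ via the smooth inclusion $T\overline{S_i}\hookrightarrow T\overline{S_{i+1}}|_{\overline{S_i}}$, the difference $V_i-W|_{\overline{S_i}}$ is a smooth section of the vector bundle $\ker df\subset T\overline{S_{i+1}}|_{\overline{S_i}}$. Extend it to a smooth section $\xi$ of $\ker df$ over all of $\overline{S_{i+1}}$ using a tubular neighborhood of $\overline{S_i}$ in $\overline{S_{i+1}}$ together with a bump function. Then $V_{i+1}:=W+\xi$ is tangent to $\overline{S_{i+1}}$, satisfies $df(V_{i+1})=1$, and restricts to $V_i$ on $\overline{S_i}$ (so tangency to each lower $\overline{S_j}$ is inherited). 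After $k$ steps set $V:=V_k$.

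The crucial obstacle is precisely the tubular-neighborhood step in the induction: it uses that $\overline{S_i}$ is a smooth closed submanifold of $\overline{S_{i+1}}$, which fails for general Whitney stratifications. Without this hypothesis one can build the lifting vector field only stratum by stratum, the resulting pieces do not fit smoothly across incidences, and one recovers only the stratified homeomorphism of Thom's original lemma rather than a diffeomorphism.
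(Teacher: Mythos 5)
Your proof is correct and follows essentially the same strategy as the paper's: inductively build a smooth vector field on $X$, tangent to each $\overline{S_i}$, lifting $\partial/\partial t$, and integrate it (the paper blends an extension of $V_{i-1}$ with the normalised gradient of $f|_{S_i}$ via a partition of unity, whereas you extend the $\ker df$-valued difference $V_i - W|_{\overline{S_i}}$ by a tubular neighbourhood and bump function, but these are interchangeable). You also spell out the flow/properness step that the paper leaves implicit, and correctly identify the smoothness of the closed strata $\overline{S_i}$ as the hypothesis that upgrades the homeomorphism of Thom's lemma to a diffeomorphism.
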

\begin{proof}
It is sufficient to construct a smooth controlled lift $V$ of $\partial / \partial x$, \ie a smooth vector field on $X$ which is tangential to the strata and such that $f_*V=\partial / \partial x$. We construct this inductively on the $\overline{S_i}$. Choose a metric on $M$ and let $W_i=  \nabla ( f|_{S_i} )$ be the gradient of $f|_{S_i}$ with respect to the restriction of the metric to $S_i$. Note that $W_i \cdot f >0$ because $f$ is a stratified submersion. 

Assume, inductively, that we have constructed a smooth controlled lift $V_{i-1}$ on the smooth submanifold $\overline{S_{i-1}}$. The base case is provided by setting 
$$
V_0= \frac{W_0}{W_0 \cdot f},
$$
so that $df{V_0} = V_0\cdot f = 1$ as required. Extend $V_{i-1}$ to a vector field $V_{i-1}'$ on an open neighbourhood $U_{i-1}$ of the smooth submanifold $\overline{S_{i-1}}$ of $\overline{S_i}$. We may assume, by restricting to a  smaller neighbourhood if necessary, that $V_{i-1}' \cdot f > 0$. Choose a partition of unity $\{\alpha,\beta\}$ with respect to the cover $\{ U_{i-1}, S_i\}$ of $\overline{S_i}$ and let $V_i'' = \alpha V_{i-1}' +\beta W_i$. Since $V_i'' \cdot f >0$ we can normalise this to obtain $V_i = V_i'' / V_i'' \cdot f$ which is the desired controlled lift on $\overline{S_i}$
\end{proof}

\section{Transversal homotopy monoids}
\label{monoids}

\subsection{Definition}
Let $X$ be a Whitney stratified manifold. In this section all spaces will be equipped with a generic basepoint $\bpt$ and all maps will be based, unless otherwise stated.  For $n\in \N$ we fix a choice of small  disk-shaped closed neighbourhood $B^n \subset S^n$ of the basepoint. Define $\psidot{n}{X}$ to be the set of equivalence classes of smooth transversal maps $(S^n,B^n) \to (X,\bpt)$ under the equivalence relation of homotopy through such maps.  Denote the class of a transversal map $f$ by $[f]$.  

The set $\psidot{0}{X}$ is the set of open strata of $X$. For $n\geq 1$ the set $\psidot{n}{X}$ has the structure of a monoid: define $[f]\cdot [g] = [(f\vee g) \circ \mu]$ where 
$$
\mu : (S^n,B^n) \to (S^n \vee S^n,\bpt)
$$
is a smooth map which is a diffeomorphism when restricted to the inverse image of $(S^n-B^n)+ (S^n - B^n)$. This is associative; the class of the constant map to $\bpt$ is the unit for the operation. We refer to $\psidot{n}{X}$ as the \defn{$n$th transversal homotopy monoid} of $X$. The usual Eckmann--Hilton argument shows that $\psidot{n}{X}$ is a {\em commutative} monoid for $n\geq 2$. 
\begin{remarks}
\begin{enumerate}
\item {\it A priori} the definition of $\psidot{n}{X}$ depends on our choice of $B^n$ and that of the product depends on the choice of $\mu$. In fact, the set $\psidot{n}{X}$ is well-defined up to canonical isomorphism independently of the choice of $B^n$ and the product is independent of the choice of $\mu$. Nevertheless, for technical convenience later we fix particular choices of $B^n$ and $\mu$. See also Remark \ref{operads}.
\item If the stratification of $X$ is trivial then any smooth map $S^n \to X$ is transversal. Since $X$ is a manifold the Whitney approximation theorem allows us to approximate any continuous map and any homotopy by smooth ones, and thus $\psidot{n}{X} \cong \pi_n(X)$. 
\item Transversal maps $S^n \to X$ only `see' strata of codimension $n$ or less: $$\psidot{n}{X} \cong \psidot{n}{X_{\leq n}}$$ where $X_{\leq n}$ is the union of strata in $X$ of codimension $\leq n$.
\item For ease of reading we omit the basepoint $\star$ from the notation for the transversal homotopy monoids, but it is of course important. For two choices of basepoint in the same stratum the transversal homotopy monoids are isomorphic (since strata are connected). See \S \ref{first monoid} for how the first transversal homotopy monoid changes if we move the basepoint to a different stratum. 
\end{enumerate}
\end{remarks}

\begin{lemma}
\label{trivial elements}
An element $[f] \in \psidot{n}{X}$ where $n \geq 1$ is invertible if and only if the stratification induced by $f$ is trivial.
\end{lemma}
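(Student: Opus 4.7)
The plan is to prove both directions by exploiting the induced stratification $S^n_f$ and its invariance under homotopy through transversal maps.

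For the backward direction, suppose the stratification induced by $f$ is trivial. By definition this means $f^{-1}(B) = \emptyset$ for every stratum $B$ of positive codimension in $X$, so $f$ factors through the open stratum $U\subset X$ containing $\bpt$. Since $U$ is an open submanifold (and basepoint is generic) any smooth homotopy of $f$ that lands in $U$ is automatically a homotopy through transversal maps. Thus the class $[f]$ lies in the image of the natural map $\pi_n(U,\bpt)\to\psidot{n}{X}$, and we can take its group inverse in $\pi_n(U,\bpt)$ to obtain an inverse in $\psidot{n}{X}$. (This uses the remark just before the lemma, that for trivially stratified targets transversal homotopy classes coincide with ordinary homotopy classes.)

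The forward direction is the substantive part, and the key tool is Proposition \ref{deformation}. Suppose $[f]$ has an inverse $[g]$, so that $(f\vee g)\circ\mu$ is homotopic through transversal maps to the constant map $c_\bpt$. The constant map has trivial induced stratification, since $\bpt$ lies in an open stratum. By Proposition \ref{deformation} applied to this homotopy, the induced stratifications $S^n_{(f\vee g)\circ\mu}$ and $S^n_{c_\bpt}$ are stratified homeomorphic; in particular the former must also be trivial, i.e.\ $((f\vee g)\circ\mu)^{-1}(B)=\emptyset$ for every positive-codimension stratum $B$ of $X$.

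It remains to transfer this to $f$. By construction $\mu:S^n\to S^n\vee S^n$ is a diffeomorphism on the preimage of $(S^n-B^n)+(S^n-B^n)$, and each summand $S^n-B^n$ is an open subset of the corresponding $S^n$ factor whose image under $f$ (respectively $g$) picks up every point in the image not mapped to $\bpt$. So if $f^{-1}(B)\neq\emptyset$ for some stratum $B$ of positive codimension, pick $x\in f^{-1}(B)$; since $f(B^n)=\{\bpt\}\not\subset B$ we have $x\in S^n-B^n$, and then $\mu^{-1}(x\vee\bpt)$ is a nonempty subset of $((f\vee g)\circ\mu)^{-1}(B)$, contradicting what we just established. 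Hence $f^{-1}(B)=\emptyset$ for every positive-codimension stratum, which is exactly the statement that the stratification induced by $f$ is trivial. The main subtlety to check carefully is this last bookkeeping step, ensuring that $\mu$ really does pull nontrivial strata of $f$ back to nontrivial strata of $(f\vee g)\circ\mu$; everything else follows cleanly from Proposition \ref{deformation}.
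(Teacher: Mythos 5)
Your proof is correct and follows essentially the same route as the paper: the backward direction uses that $f$ lands in the open stratum so the ordinary homotopy inverse works, and the forward direction combines Proposition \ref{deformation} (triviality of the induced stratification is preserved under homotopy through transversal maps) with the observation that a nontrivial stratum of $f$ forces a nontrivial stratum of $(f\vee g)\circ\mu$. The only difference is cosmetic — you argue the forward direction directly from an assumed inverse rather than contrapositively, and you spell out the bookkeeping about $\mu$ that the paper leaves implicit.
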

\begin{proof}
If the induced stratification of a transversal map $f:S^n \to X$ is not trivial then Proposition \ref{deformation} shows that the induced stratification of any other representative is non-trivial too. Hence the condition is invariant under homotopies through transversal maps. Furthermore if the stratification induced by $f$ is non-trivial then so is that induced by $(f\vee g)\circ \mu$  for any $g$ and so $[f]$ cannot be invertible. 

Conversely if the stratification induced by $f$ is trivial then $f$ maps $S^n$ into the open stratum containing the basepoint and the usual inverse of homotopy theory provides an inverse in $\psidot{n}{X}$.
\end{proof}
It follows that  $\psidot{n}{X}$ is not in general a group --- rather it is a dagger monoid. By this we mean an associative monoid, $M$ say, with anti-involution $a \mapsto a^\dagger$ such that
$1^\dagger=1$ and $(ab)^\dagger = b^\dagger a^\dagger$.  A \defn{map of dagger monoids} is a map $\varphi : M \to M'$ which preserves the unit, product and anti-involution.

The anti-involution on $\psidot{n}{X}$ is given by $[f]^\dagger = [f\circ \rho ]$ where $\rho : S^n \to S^n$ is the reflection in a hyperplane through the basepoint $\bpt$. Obviously there are many choices for $\rho$, but they all yield the same anti-involution. Note that when $n=0$ the anti-involution is trivial. 
\begin{remark}
\label{unitary isom}
Lemma \ref{trivial elements} implies that the dagger monoids which arise as transversal homotopy monoids have the special property that 
 $ab=1$ implies $b=a^\dagger$. Thus $a^\dagger$  is a `potential inverse' of $a$ or, put another way, all invertible elements are unitary. 
\end{remark}

By Lemma \ref{gen comp} the composition $s\circ f : S^n\to Y$ is transversal whenever $f:S^n\to X$ is transversal and $s:X \to Y$ is a stratified normal submersion. In this situation there is therefore a well-defined map $$\psidot{n}{s}:\psidot{n}{X} \to \psidot{n}{Y}.$$ For $n>0$ this is a map of dagger monoids. It is easy to complete the proof of the next result.
\begin{theorem}
There are functors $\psidot{n}{-}$ for $n>0$ from the category $\bsns$ of based Whitney stratified manifolds and stratified normal submersions to the category of dagger monoids. If $s$ and $s'$ are homotopic in $\bsns$ then $\psidot{n}{s}=\psidot{n}{s'}$, and consequently Whitney stratified manifolds which are homotopy equivalent in $\bsns$ have isomorphic transversal homotopy monoids. 
\end{theorem}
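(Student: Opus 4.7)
My plan decomposes into four steps: well-definedness of the assignment $s \mapsto \psidot{n}{s}$ on equivalence classes, preservation of the dagger monoid structure, functoriality, and homotopy invariance (the corollary about homotopy equivalences will then follow formally).

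First I would check well-definedness. Given a based stratified normal submersion $s:X\to Y$ and a transversal representative $f:(S^n,B^n)\to(X,\bpt)$, Lemma~\ref{gen comp} ensures $s\circ f$ is transversal, and it is based and smooth by construction (in particular $s(f(B^n))=s(\bpt)=\bpt$). If $h$ is a homotopy through transversal based maps from $f$ to $f'$, each slice $h_t$ is transversal, so by the same lemma each $s\circ h_t$ is transversal; hence $s\circ h$ is a homotopy through transversal based maps from $s\circ f$ to $s\circ f'$.

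Next I would verify the algebraic structure. Since $s$ is based, $s\circ(f\vee g) = (s\circ f)\vee(s\circ g)$, which yields $\psidot{n}{s}([f]\cdot[g])=\psidot{n}{s}[f]\cdot\psidot{n}{s}[g]$; the constant map at $\bpt$ is sent to itself, giving the unit; and because the reflection $\rho$ defining the dagger acts on the source, $\psidot{n}{s}([f]^\dagger)=[s\circ f\circ\rho]=(\psidot{n}{s}[f])^\dagger$. Functoriality is then immediate from associativity of composition, since $\psidot{n}{\id_X}=\id$ and $\psidot{n}{s'\circ s}[f]=[s'\circ s\circ f]=\psidot{n}{s'}\psidot{n}{s}[f]$.

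The only substantive step is homotopy invariance. Given a homotopy $h:X\times[0,1]\to Y$ in $\bsns$ from $s$ to $s'$ (so that each slice $h_t$ is itself a stratified normal submersion, by the definition of homotopy in $\bsns$ from \S\ref{category whit}) and any transversal based $f:(S^n,B^n)\to(X,\bpt)$, I would form $H(x,t)=h(f(x),t)$. Each slice $H_t=h_t\circ f$ composes a transversal map with a stratified normal submersion, hence is transversal by Lemma~\ref{gen comp}; and $H_t$ is based for all $t$ because $h_t$ is. After a standard reparametrization of the $t$ variable to enforce the endpoint constancy of~(\ref{h boundary conditions}), $H$ is the desired homotopy through transversal based maps from $s\circ f$ to $s'\circ f$, so $\psidot{n}{s}[f]=\psidot{n}{s'}[f]$. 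The corollary for homotopy equivalences then follows formally: applying $\psidot{n}{-}$ to a pair of mutually inverse equivalences $s:X\to Y$ and $t:Y\to X$ in $\bsns$ gives $\psidot{n}{t}\circ\psidot{n}{s}=\id$ and the symmetric identity, so $\psidot{n}{s}$ is an isomorphism. The main (and mild) obstacle will be the reparametrization, since the notion of homotopy in $\bsns$ does not a priori build in the boundary constancy demanded of a homotopy through transversal maps; otherwise the entire theorem is a direct packaging of Lemma~\ref{gen comp}.
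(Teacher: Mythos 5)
Your proposal is correct and follows exactly the route the paper intends: the paper reduces everything to Lemma~\ref{gen comp} and explicitly leaves the remaining verifications (compatibility with $\vee$, $\mu$ and $\rho$, functoriality, and the reparametrised post-composition argument for homotopy invariance) to the reader as routine, which is precisely what you have supplied. The one point you flag as a potential obstacle --- reparametrising $t$ so that $h_t\circ f$ satisfies the endpoint-constancy condition of (\ref{h boundary conditions}) --- is handled correctly and is indeed the only detail not entirely formal.
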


\subsection{An example: spheres}
\label{ex1}
We consider the transversal homotopy monoids of a sphere stratified by the antipode $p$ of the basepoint $\bpt$ and its complement $S^n-\{p\}$. We denote this space by $\mathbb{S}^k$. For $n,k>0$ the Pontrjagin--Thom construction, suitably interpreted, yields an isomorphism of dagger monoids
$$
	\psidot{n}{\mathbb{S}^k} \cong \frtang{}{k}{n}
$$
where $\frtang{}{k}{n}$ is the monoid of framed codimension $k$ tangles in $n$ dimensions. By this we mean it is the set of smooth ambient isotopy classes, relative to $B^n$, of  framed  codimension $k$ closed submanifolds of $S^n-B^n$. The monoidal structure is defined using the map $\mu:S^n \to S^n\vee S^n$ from the previous section --- given two submanifolds of $S^n-B^n$ consider their disjoint union as a submanifold of $S^n\vee S^n$ and take its pre-image under $\mu$. The unit is the empty submanifold and the dagger dual is obtained by applying the reflection $\rho:S^n \to S^n$. 

Here is the construction. There is a map $\iota: \psidot{n}{\mathbb{S}^k} \to \frtang{}{k}{n}$ defined by taking the induced stratification. More precisely, choose a framing of the point stratum $p\in \mathbb{S}^k$, \ie an orientation of $T_p\mathbb{S}^k$, and define $\iota[f]$ to be the (ambient isotopy class) of the pre-image $f^{-1}(p)$ with the pulled-back framing on 
$$
Nf^{-1}(p) \cong f^*T_p\mathbb{S}^k.
$$
To see that $\iota$ is well-defined we apply Remark \ref{smooth deformation} which yields the requisite smooth ambient isotopy of pre-images. 

An inverse to $\iota$ is provided by the `collapse map' construction. The proof of the following lemma is sketched in appendix \ref{collapse maps}.
\begin{lemma}
\label{collapse lemma}
Suppose $W$ is a smooth framed codimension $k$ closed submanifold of $S^n-B^n$. Then we can choose a \defn{collapse map} $\kappa_W:(S^n,B^n)\to (S^k,\bpt)$ for $W$ with the properties 
\begin{enumerate}
\item $\kappa_W^{-1}(p)=W$,
\item the restriction of $\kappa_W$ to $\kappa_W^{-1}(S^k-B^k)$ is a submersion,
\item the framing of $W$ agrees with that given by the isomorphism
$$
NW \cong \kappa^*_WT_pS^k \cong W \times \R^k.
$$
\end{enumerate}
The second property ensures that $\kappa_W: S^n\to \mathbb{S}^k$ is transversal.
If $W$ and $W'$ are ambiently isotopic (with the normal framings preserved) then any two choices of collapse maps $\kappa_W$ and $\kappa_W'$ are homotopic through transversal maps. Finally if $f:S^n\to \mathbb{S}^k$ is transversal then $f$ is homotopic to a collapse map for $f^{-1}p$ through transversal maps.  
\end{lemma}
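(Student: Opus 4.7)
The construction of $\kappa_W$ follows the Pontryagin--Thom recipe. Using the framing, choose a tubular neighborhood $\nu\colon W\times D^k \hookrightarrow S^n - B^n$ whose fibrewise derivative along $W\times\{0\}$ realizes the given framing; such a $\nu$ exists by the tubular neighborhood theorem combined with the framing. Fix, once and for all, a smooth \emph{model map} $\phi\colon \R^k \to S^k$ with $\phi(0)=p$, $d\phi_0$ the preferred orientation-preserving isomorphism onto $T_p S^k$, $\phi$ constantly equal to $\bpt$ outside the closed unit disk, and $\phi|_{\phi^{-1}(S^k - B^k)}$ a submersion. Define $\kappa_W := \phi\circ\pi_2\circ\nu^{-1}$ on $\nu(W\times D^k)$ and $\kappa_W := \bpt$ elsewhere; the map is smooth because $\phi$ is constant near $\partial D^k$. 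Properties (1)--(3) then follow directly from the construction. Transversality of $\kappa_W\colon S^n\to\mathbb{S}^k$ is checked stratum by stratum: on $W = \kappa_W^{-1}(p)$ the differential surjects onto $T_p S^k$, so $\kappa_W$ is transversal to the zero-dimensional stratum $\{p\}$; elsewhere $\kappa_W$ lands in the open stratum, where transversality is automatic.

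To prove the uniqueness-up-to-homotopy statement I would argue in two stages. First, for a fixed framed $W$, any two valid pairs $(\nu, \phi)$ produce collapse maps that are homotopic through transversal maps: the space of framed tubular neighborhoods of $W$ in $S^n - B^n$ is contractible (the standard convexity argument via a Riemannian structure), and the space of model maps is star-convex once one straightens via the exponential chart at $p$. Linear interpolation of $(\nu,\phi)$ preserves properties (1)--(3) at each time, so each slice of the interpolating homotopy is transversal. Second, given an ambient isotopy $F_t$ of $S^n - B^n$ rel $B^n$ with $F_0 = \id$ and $F_1(W) = W'$, preserving framings, the family $\kappa_W \circ F_t^{-1}$ (extended by $\bpt$ on $B^n$) is a homotopy through transversal maps from $\kappa_W$ to a collapse map for $W'$, which by the first stage is transversally homotopic to any prescribed $\kappa_{W'}$.

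For the last assertion, let $f\colon S^n \to \mathbb{S}^k$ be transversal and set $W = f^{-1}(p)$, framed by $df\colon NW \xrightarrow{\cong} f^*T_p S^k$. Choose a tubular neighborhood $\nu$ of $W$ compatible with this framing; after shrinking if necessary we may assume that $f\circ\nu$ takes values in an exponential chart centred at $p$. In this chart, $f\circ\nu$ and the standard model $(w,v)\mapsto \phi(v)$ agree at $W\times\{0\}$ and have equal normal derivative (namely the framing). The straight-line homotopy $H_t = (1-t)(f\circ\nu) + t\,\phi\circ\pi_2$, multiplied by a bump function so as to agree with $f$ outside a slightly larger tubular neighborhood, is smooth, satisfies $H_t^{-1}(p) = W$ with the correct framing for every $t$, and outside the region where it is being standardized takes values in $S^k - \{p\}$. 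Each $H_t$ is therefore transversal, and $H_1$ already equals $\kappa_W$ inside the smaller tube. A second cutoff homotopy, carried out entirely inside the contractible open stratum $S^k - \{p\}$ where transversality is automatic, then pushes $H_1$ to $\bpt$ on the annular region between the two tubes, producing $\kappa_W$ exactly.

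The main obstacle is the normal-form step in the last paragraph: arranging the cutoffs so that the interpolating family is genuinely transversal at each time, rather than merely a family of smooth maps. The crucial observation is that transversality near $W$ reduces to surjectivity of the normal derivative onto $T_p S^k$, a condition that is both open and preserved by convex combinations of derivatives sharing the framing; once this is in hand, the standard bump-function constructions go through without difficulty.
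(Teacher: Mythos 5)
Your proposal is correct and follows the same Pontrjagin--Thom strategy as the paper: build $\kappa_W$ from a framed tubular neighbourhood composed with a model map $\R^k\to S^k$, deduce uniqueness from uniqueness of tubular neighbourhoods, and handle ambient isotopies by pre-composition with the isotopy. (The paper packages the first uniqueness step slightly differently, as a collapse map for the cylinder $W\times[0,1]\subset S^n\times[0,1]$, and smooths a stereographic-projection model after the fact rather than fixing a pre-smoothed model $\phi$; these are cosmetic differences.) The one genuinely different step is the final claim. You interpolate $f$ with the standard model on the domain side, inside a small tube around $f^{-1}(p)$, and control transversality via the cone/convexity argument on the normal derivative, followed by a cutoff homotopy in the open stratum. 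The paper instead uses the local product structure of $f$ over a small ball $U_\epsilon$ about $p$ (guaranteed by transversality) and post-composes $f$ with a homotopy $\eta_t$ of $\mathbb{S}^k$ fixing a neighbourhood of $p$ pointwise and contracting $\mathbb{S}^k-U_\epsilon$ to $\bpt$. The post-composition trick is somewhat cleaner, since it makes $(\eta_t\circ f)^{-1}(p)=f^{-1}(p)$ and the transversality of each slice immediate, avoiding the need to verify that the zero set of a convex combination does not grow and that the two cutoff stages patch smoothly; your route works but puts more weight on exactly the estimates you flag as the main obstacle. Both arguments are at the same level of rigour as the paper's own sketch in the appendix.
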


\begin{corollary}
\label{IK isom}
There is a well-defined map $\kappa: \frtang{}{k}{n} \to \psidot{n}{\mathbb{S}^k}$ taking the class $[W]$ to the class $[\kappa_W]$. It is inverse to $\iota$.
\end{corollary}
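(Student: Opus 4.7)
The plan is to read the corollary as a direct bookkeeping exercise on top of Lemma \ref{collapse lemma}: the lemma already encodes essentially every homotopy-through-transversal-maps we need, and the task is to assemble its four assertions into (i) well-definedness of $\kappa$, (ii) $\iota\circ\kappa = \id$, and (iii) $\kappa\circ\iota = \id$.

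First I would establish well-definedness. Given $[W]\in\frtang{}{k}{n}$, we must check that $[\kappa_W]\in\psidot{n}{\mathbb{S}^k}$ depends neither on the chosen representative $W$ nor on the particular collapse map constructed for it. Both independences are the content of the second part of Lemma \ref{collapse lemma}: an ambient isotopy of framed submanifolds from $W$ to $W'$ lifts to a homotopy through transversal maps from some $\kappa_W$ to some $\kappa_{W'}$, and any two collapse maps for the same $W$ are likewise connected through transversal maps. In either case $[\kappa_W]=[\kappa_{W'}]$ in $\psidot{n}{\mathbb{S}^k}$.

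Next I would verify $\iota\circ\kappa = \id$. Starting from $[W]$, property (1) of the lemma gives $\kappa_W^{-1}(p)=W$ as unframed submanifolds, and property (3) identifies the framing pulled back via $NW\cong \kappa_W^{\ast}T_p\mathbb{S}^k$ with the original framing of $W$. Hence $\iota[\kappa_W]=[W]$ on the nose. For the other composite $\kappa\circ\iota = \id$, start with a transversal map $f:S^n\to \mathbb{S}^k$. Then $\iota[f]$ is the class of the framed submanifold $f^{-1}(p)$, and $\kappa[f^{-1}(p)]$ is by definition $[\kappa_{f^{-1}(p)}]$ for any choice of collapse map. The last sentence of Lemma \ref{collapse lemma} says precisely that $f$ is itself homotopic through transversal maps to such a collapse map, so $[\kappa_{f^{-1}(p)}] = [f]$.

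Since both compositions are the identity and each is built entirely from invocations of Lemma \ref{collapse lemma}, the corollary follows. There is no real obstacle beyond keeping the framings straight: one has to be careful that the pulled-back framing produced by $\iota$ and the framing used to construct $\kappa_W$ refer to the same isomorphism $NW\cong W\times\R^k$ determined by the fixed orientation of $T_p\mathbb{S}^k$. This compatibility is exactly property (3) of the lemma, so the check is immediate once that convention is adopted consistently.
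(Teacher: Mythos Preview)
Your proposal is correct and follows exactly the same route as the paper's own proof: well-definedness from the uniqueness clause of Lemma~\ref{collapse lemma}, $\iota\kappa=\id$ from properties (1) and (3) (the paper compresses this to ``identity on representatives''), and $\kappa\iota=\id$ from the final statement of the lemma. You have simply unpacked the paper's three sentences with more care about the framing compatibility, which is entirely appropriate.
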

\begin{proof}
The existence of $\kappa$ is immediate from the above lemma. The composite $\iota\kappa$ is the identity on representatives. By the last statement of the lemma the representative for $\kappa\iota[f]$ is homotopic to $f$ through transversal maps, so that $\kappa\iota[f]=[f]$. 
\end{proof}

To give some concrete examples, $\psidot{k}{\mathbb{S}^k}$ is the free dagger monoid on one generator when $k=1$ and the free {\em commutative} dagger monoid on one generator when $k\geq 2$. A more interesting example is provided by  $\psidot{3}{\mathbb{S}^2}$ which is the monoid of ambient isotopy classes of framed links. In particular, given any stratified normal submersion $s:\mathbb{S}^2 \to X$ the map 
$$
\psidot{3}{s}: \psidot{3}{\mathbb{S}^2} \to \psidot{3}{X}
$$
defines a framed link invariant valued in $ \psidot{3}{X}$. For example, the `forget the stratification' map $\mathbb{S}^2 \to S^2$ yields the self-linking number
$$
lk: \psidot{3}{\mathbb{S}^2} \to \pi_3(S^2) \cong \Z,
$$
\ie the linking number of a link $L$ with the link $L'$ obtained by moving $L$ off itself using the framing. 

\begin{remark}
Here is another way to obtain link invariants: given $L \subset S^3$ consider the transversal homotopy monoids of the space $S^3_L$ which is $S^3$ stratified by $L$ and $S^3-L$. Note that $\psidot{1}{S^3_L}$ is the fundamental group of the link complement, which is of course a well-known and powerful invariant. The higher invariants $\psidot{n}{S^3_L}$ seem novel.
\end{remark}

The above construction generalises to give an isomorphism 
\begin{equation}
\label{coloured IK isom}
\psidot{n}{\fatwedge_r \mathbb{S}^k} \cong \frtang{r}{k}{n}
\end{equation}
from the transversal homotopy monoid of a fat wedge $\fatwedge_r \mathbb{S}^k$ of spheres to the monoid of $r$-coloured framed codimension $k$ tangles in $n$ dimensions. Here `$r$-coloured' just means that each component of the tangle is labelled with one of $r$ `colours', and this labelling must be respected by the isotopies. The map is given by the induced stratification, with strata `coloured' by the point stratum of $\fatwedge_r\mathbb{S}^k$ to which they map. The proof that it is an isomorphism is similar to the uncoloured case, but now we use collapse maps $S^n \to \vee_r\mathbb{S}^k$ (or equivalently to the fat wedge). We omit the details. 

\begin{remark}
\label{operads}
One way to describe the structure of $n$-fold loop spaces is as algebras for the operad $U_n$ with
$$
U_n(r)= \cat{Top}_*(S^n, \vee_r S^n).
$$
The naive analogue for transversal homotopy theory fails because the composite of transversal maps need not be transversal. We can avoid this difficulty by working with collapse maps, which can be composed. There is an operad ${\rm Coll}_n$ with
$$
{\rm Coll}_n(r) = \{ \textrm{Collapse maps}\ S^n \to \vee_r\mathbb{S}^n \} \subset C^\infty(S^n,\vee_r S^n).
$$
Taking classes under homotopy through transversal maps, we see that $\psidot{n}{X}$ is an algebra for  an operad $\{ \psidot{n}{\fatwedge_r\mathbb{S}^n} \ | \ r\in \N \}$. For instance the product arises from $[\mu] \in \psidot{n}{\fatwedge_2\mathbb{S}^n}$, and any representative of this class gives the same product. Associativity of the product follows from the equation
$$
[ (1\vee \mu)\circ \mu] = [(\mu\vee 1)\circ \mu] \in \psidot{n}{\fatwedge_3\mathbb{S}^n} 
$$
and so on. Such equations can be visualised in terms of coloured isotopy classes. 
\end{remark}

\subsection{Products}
\label{products}

Homotopy groups respect products, that is $\pi_n(X \times Y) \cong \pi_n(X) \times \pi_n(Y)$. The situation is more complex for transversal homotopy monoids.

\begin{proposition}
Let $X$ and $Y$ be based Whitney stratified manifolds, and let ${\imath_X} :X \to X \times Y : x \mapsto (x,\bpt)$ be the inclusion and $\pi_Y: X \times Y \to Y$ the projection. Then there is a short exact sequence of dagger monoids
$$
1 \to \psidot{n}{X} \stackrel{\psidot{n}{\imath_X}}{\longrightarrow} \psidot{n}{X\times Y}\stackrel{\psidot{n}{\pi_Y}}{\longrightarrow}  \psidot{n}{Y} \to 1.
$$
Furthermore, the sequence is split in the sense that $\pi_X$ and $\imath_Y$ induce respectively a left and a right inverse for $\psidot{n}{\imath_X}$ and $\psidot{n}{\pi_Y}$. 
\end{proposition}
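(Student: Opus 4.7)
The splitting and one containment are immediate from functoriality. Since $\pi_X\circ\imath_X=\id_X$ and $\pi_Y\circ\imath_Y=\id_Y$, applying $\psidot{n}{-}$ produces the claimed left and right inverses, so $\psidot{n}{\imath_X}$ is injective and $\psidot{n}{\pi_Y}$ is surjective. Moreover $\pi_Y\circ\imath_X$ is the constant map at $\bpt$, whose class is the unit, so $\im\psidot{n}{\imath_X}\subseteq\ker\psidot{n}{\pi_Y}$.

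The real work is the reverse inclusion: given $[f]\in \psidot{n}{X\times Y}$ with $[\pi_Y\circ f]=1$, exhibit $[f]$ in the image of $\psidot{n}{\imath_X}$. My plan is to pick a homotopy $H\colon S^n\times[0,1]\to Y$ through transversal maps from $\pi_Y\circ f$ to the constant map at $\bpt$, set $a=\pi_X\circ f$ (transversal by Lemma \ref{gen comp}), and verify that $F(x,t)=(a(x),H(x,t))$ is a homotopy through transversal maps from $f$ to $\imath_X\circ a$. The subtle point, and the main obstacle, is that this slicewise transversality is \emph{not} automatic from transversality of $a$ and of each $H_t$: individually transversal maps to stratified spaces need not have product transversal to the product stratification (consider the diagonal $\R\hookrightarrow \R^2$ with both factors stratified by the origin).

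The argument is rescued by the observation that $H$ in fact takes values in the open stratum $B_0\subset Y$ containing $\bpt$. To see this, fix any other stratum $B$ of $Y$: Proposition \ref{whitney transversality} makes $H^{-1}(B)$ a Whitney stratified subspace of $S^n\times[0,1]$, and Theorem \ref{transversality theorem} combined with slicewise transversality shows that the projection $H^{-1}(B)\to[0,1]$ is a proper stratified submersion. After extending the homotopy slightly past its endpoints as in the proof of Proposition \ref{deformation}, Thom's first isotopy lemma (Theorem \ref{TFIL}) trivialises this projection, so every fibre is homeomorphic to the fibre over $t=1$, which is empty. Hence $H_t^{-1}(B)=\emptyset$ for all $t$, and $H_t(S^n)\subseteq B_0$. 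With this in hand, every image point $F_t(x)$ lies in a product stratum $A\times B_0$ with $A\subset X$ the stratum containing $a(x)$; since $B_0$ is open in $Y$ the normal space of $A\times B_0$ inside $X\times Y$ reduces to $N_{a(x)}A$, so slicewise transversality of $F$ follows from the surjection $da_x\colon T_xS^n\to N_{a(x)}A$ provided by transversality of $a$. Checking the boundary condition (\ref{h boundary conditions}) and that $F$ is based is routine, so $F$ is the required homotopy through transversal maps and $[f]=\psidot{n}{\imath_X}[a]$. The stratum-containment argument via Thom's isotopy lemma is the crux; everything else is either functoriality or a direct linear-algebra computation.
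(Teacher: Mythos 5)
Your proof is correct and follows essentially the same route as the paper's: the paper likewise reduces exactness in the middle to the observation (via Lemma \ref{trivial elements}, whose proof rests on the same application of Proposition \ref{deformation} / Thom's first isotopy lemma that you spell out) that a transversal map to $Y$ representing the unit, together with any null-homotopy of it through transversal maps, stays in the open stratum of the basepoint, so that $(\pi_X\circ f, h)$ is a homotopy through transversal maps. Your explicit check of slicewise transversality of the product homotopy is precisely the step the paper dismisses as ``routine''; the only small imprecision is that for a non-closed stratum $B$ one should apply Thom's lemma to the projection from the whole induced stratification of $S^n\times[0,1]$, as in Proposition \ref{deformation}, rather than to $H^{-1}(B)$ alone, which need not be closed and hence need not project properly.
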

\begin{proof}
The proof is routine. To see that the sequence is exact in the middle, one applies Lemma \ref{trivial elements} to show that if $\psidot{n}{\pi_Y}[f] = [\pi_Y \circ f]$ is trivial then $\pi_Y \circ f$ factors through the open stratum $U \subset Y$ containing the basepoint. It follows that if $h$ is a homotopy in $Y$ through transversal maps from $\pi_Y\circ f$ to the constant map $q$ then $(\pi_X\circ f , h)$ is a homotopy in $X\times Y$ through transversal maps from $f$ to ${\imath_X} \circ \pi_X \circ f$.
\end{proof}

This proposition does not imply that $\psidot{n}{X\times Y} \cong \psidot{n}{X} \times \psidot{n}{Y}$. The simplest counterexample is when $X=Y=\mathbb{S}^1$ is a circle stratified by a point and its complement. In this case the short exact sequence for $n=1$ is
$$
1 \to \langle a \rangle \to \langle a,b \rangle \to \langle b \rangle \to 1
$$
where angled brackets denote the free dagger monoid on the specified generators (and the maps are the obvious ones). But it is certainly not the case that
$$
  \langle a,b \rangle \cong  \langle a \rangle \times  \langle b \rangle
  $$
as the latter is the free {\em commutative} dagger monoid on generators $a$ and $b$. Geometrically, the reason for this is that $\psidot{n}{-}$ depends only on the strata of codimension $\leq n$ and, in general, taking products introduces new strata of high codimension.
\begin{remark}
This illustrates a general problem in computing transversal homotopy monoids. The most often used tools for computing homotopy groups are the long exact sequence of a fibration and spectral sequences. Even if one had  analogues of these, they would be very weak tools in comparison because monoids do not form an abelian category. 
\end{remark}

\subsection{The first transversal homotopy monoid}
\label{first monoid}

We collect together a miscellany of observations and simple results about the structure of $\psidot{1}{X}$. We have already seen that there are restrictions on the monoids which can arise as $\psidot{1}{X}$ of some Whitney stratified manifold: they must be dagger monoids and all isomorphisms must be unitary (Remark \ref{unitary isom}). Proposition \ref{deformation} shows that the number of times a generic path in $X$ crosses a specified codimension $1$ stratum is an invariant of the path's class in  $\psidot{1}{X}$ which prohibits certain kinds of relations. It also prohibits the existence of an analogue of Eilenberg--MacLane spaces for transversal homotopy theory: if $\psidot{n}{X}$ contains an element with non-trivial induced stratification (\ie corresponding to a transversal map which meets higher codimension strata of $X$) then so does $\psidot{i}{X}$ for all $i>n$.

If the open (codimension $0$) strata of $X$ are simply-connected then $\psidot{1}{X}$ is the loop monoid of a finite directed graph with a source and target reversing involution on the edges.  Specifically, the graph has one vertex for each open stratum, a pair of edges in opposite directions between these vertices whenever the corresponding strata are separated by a codimension $1$ stratum with trivial normal bundle, and a loop at the ambient stratum for each codimension $1$ stratum with non-orientable normal bundle. The involution swaps pairs of edges corresponding to strata with trivial normal bundles and fixes the other edges. It follows directly from Proposition \ref{deformation} that an element of $\psidot{1}{X}$ is uniquely specified by the sequence of crossings (with orientation) of codimension $1$ strata. Thus $\psidot{1}{X}$ is isomorphic to the monoid of loops, based at the vertex corresponding to the stratum of the basepoint, in the directed graph. 

Conversely, any such graph can be realised starting from a closed stratified $3$-manifold. To construct the $3$-manifold, take a copy of $S^3$ for each vertex and connect sum whenever there is a pair $e$ and $e^\dagger$ of edges between two vertices. Whenever there is a loop $e=e^\dagger$ at a vertex, excise a disk from the corresponding $S^3$ and glue in the disk bundle of the canonical line bundle on $\R\P^2$. The result, after smoothing, is a closed $3$-manifold. Stratify it by taking a slice $S^2$ of each connect sum `bridge' and the zero section $\R\P^2$ of each added disk bundle as codimension $1$ strata. 

It is possible to give a similar combinatorial characterisation of those monoids which can arise as $\psidot{1}{X}$. In the general case one considers graphs with involution whose vertices are labelled by the fundamental groups of the corresponding open strata.

\begin{corollary}
If $X$ is a Whitney stratified manifold whose open strata are simply-connected then $\psidot{1}{X}$ is a quotient
$$
\langle l_i, i\in I \ | \ l_i =l_i^\dagger, i\in J \subset I \rangle
$$
of a free dagger monoid on generators $l_i$ for $i$ in a countable set $I$ subject to relations $l_i=l_i^\dagger$ for $i$ in a subset $J$. In particular $\psidot{1}{X}$ is free as a monoid (although not necessarily as a dagger monoid). 
\end{corollary}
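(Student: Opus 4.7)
The plan is to apply the combinatorial description of $\psidot{1}{X}$ established immediately above: since the open strata are simply-connected, $\psidot{1}{X}$ is isomorphic to the loop monoid $L_v(G)$ of the directed graph $G$ at the vertex $v$ corresponding to the basepoint stratum, with edge-involution $e\mapsto\bar e$. Under this isomorphism the dagger on $\psidot{1}{X}$ (induced by the reflection $\rho:S^1\to S^1$) corresponds to the involution sending $e_1 e_2\cdots e_k$ to $\bar e_k \bar e_{k-1}\cdots \bar e_1$, because reversing the direction of traversal of a loop reverses both the order in which codimension $1$ strata are crossed and the orientation of each crossing. Everything that follows is a purely combinatorial analysis of $L_v(G)$ as a dagger monoid.

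The main step is to show that $L_v(G)$ is a free monoid. I would call a loop at $v$ \emph{primitive} if it does not visit $v$ internally, and denote by $P$ the set of primitive loops. Any loop $\gamma = e_1\cdots e_k$ has a unique decomposition $\gamma = \gamma_1\cdots \gamma_m$ obtained by cutting at the indices $i<k$ with $t(e_i)=v$; each $\gamma_j$ is primitive, and the decomposition is unique because concatenation of edge-words is length-additive and the cut points are intrinsic to $\gamma$. Hence $L_v(G)$ is free on $P$, which already yields the final sentence of the corollary. Countability of $P$ follows from local finiteness of the stratification on a second-countable manifold, which forces $G$ to have countably many edges.

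It then remains to recognise the stated presentation. The edge-involution squares to the identity and swaps source and target, so $(-)^\dagger$ restricts to an involution on $P$. Let $J\subseteq P$ be its fixed-point set and choose one representative from each two-element orbit, indexing the transversal by $I\setminus J$; set $I = J\sqcup (I\setminus J)$, so that $P = \{l_i : i\in I\}\sqcup \{l_i^\dagger : i\in I\setminus J\}$. The free monoid on $P$ is then precisely the free dagger monoid on generators $\{l_i : i\in I\}$ modulo the relations $l_j = l_j^\dagger$ for $j\in J$, which is the required presentation.

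I do not anticipate a serious obstacle: the one piece of genuine content is the unique factorisation of a loop into primitive loops, and this is immediate once one remembers that we work with directed edge-words under concatenation with no cancellation, so that the visits of $\gamma$ to $v$ tautologically determine its factorisation. Everything else is bookkeeping about the two kinds of $\dagger$-orbits on $P$.
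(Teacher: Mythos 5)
Your proposal is correct and follows essentially the same route as the paper: both arguments reduce to the loop monoid of the labelled graph, define primitive loops as those meeting the base vertex only at their endpoints, observe that primitivity is $\dagger$-invariant and that there are countably many primitives, and then choose one representative from each orbit $\{l,l^\dagger\}$, with $J$ indexing the fixed loops. The paper's proof is terser — it leaves the unique factorisation of a loop into primitives implicit — whereas you spell it out, but there is no substantive difference in method.
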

\begin{proof}
Call a loop based at $v_0$ in a graph with involution {\em primitive} if it does not pass through $v_0$ except at the ends. If $l$ is primitive then so is $l^\dagger$. There are countably many primitive loops. Choose one from each set $\{ l, l^\dagger\}$ of primitive loops; the set of these choices is a generating set and the only relations are as stated.
\end{proof}
\noindent For an example in which $\psidot{1}{X}$ is not finitely generated consider $S^1$ with three point strata. 

Essentially the same argument as for the usual case gives the following `van Kampen' theorem. Alternatively, it can be deduced from the combinatorial description of $\psi_1$ in terms of graphs.
\begin{proposition}
\label{van kampen}
Let $X$ and $Y$ be Whitney stratified manifolds. Then $$\psidot{1}{X \fatwedge Y} \cong \psidot{1}{X} * \psidot{1}{Y}$$ is a free product.
\end{proposition}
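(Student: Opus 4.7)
The plan is to adapt the classical van Kampen argument to the transversal setting, exploiting the explicit description $X \fatwedge Y = N_X \cup B \cup N_Y$ implicit in the proof of Lemma \ref{fattening}, where $N_X, N_Y$ are tubular neighbourhoods of $X - \{\bpt\}$ and $Y - \{\bpt\}$ in the ambient manifold and $B$ is a small open ball around the wedge point. The factor inclusions $X, Y \hookrightarrow X \fatwedge Y$, obtained by composing the ordinary wedge inclusions with the fattening map $\imath$, send each basepoint to $\bpt \in B$ and are straightforwardly checked to be stratified normal submersions; they therefore induce dagger-monoid maps $\psidot{1}{X}, \psidot{1}{Y} \to \psidot{1}{X \fatwedge Y}$ which assemble into the desired map $\Phi : \psidot{1}{X} * \psidot{1}{Y} \to \psidot{1}{X \fatwedge Y}$ by the universal property of the free product in the category of dagger monoids.

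For the good cover, I would set $U_X = N_X \cup B$ and $U_Y = N_Y \cup B$. Each is an open Whitney stratified submanifold of $X \fatwedge Y$ with basepoint $\bpt$ in its open stratum, and each deformation retracts onto the corresponding factor in $\sns$ by a variant of the retraction $p$ from Lemma \ref{fattening}. The key geometric observation is that $U_X \cap U_Y = B$ lies entirely in a single open stratum of $X \fatwedge Y$, so any smooth isotopy supported in $B$ is automatically a homotopy through transversal maps.

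For surjectivity, given a transversal loop $\gamma : (S^1, B^1) \to (X \fatwedge Y, \bpt)$, the preimages of $U_X$ and $U_Y$ cover $S^1$ and refine to a finite cyclic partition into closed arcs, each mapped into $U_X$ or $U_Y$ with endpoints in $B$. Smoothly isotoping these endpoints through $B$ onto $\bpt$ yields a homotopy through transversal maps to a new loop whose sub-arcs are based loops in $U_X \simeq X$ or $U_Y \simeq Y$; their concatenation is a preimage of $[\gamma]$ under $\Phi$. Injectivity follows from the same scheme one dimension higher: a transversal homotopy $h : S^1 \times [0,1] \to X \fatwedge Y$ between two words in the free product is subdivided into small rectangles each mapped into $U_X$ or $U_Y$, and the subdividing $1$-skeleton is pushed into $B$ and then onto $\bpt$ by isotopies supported in $B$; reading off the resulting decomposition expresses the equality of words as a finite sequence of elementary relations within $\psidot{1}{X}$ or $\psidot{1}{Y}$ alone.

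The main technical obstacle is ensuring that every stage of the decomposition is realised through transversal -- not merely continuous -- maps, and the observation on $B$ is precisely what removes it: all of the adjustments above only alter $\gamma$ or $h$ inside the single open stratum containing $\bpt$, where transversality is automatic, and Proposition \ref{deformation} (via Thom's first isotopy lemma) then supplies the smooth controlled isotopies needed to realise the pushes. As a cross-check, when the open strata of $X$ and $Y$ are simply connected the result follows from the combinatorial description of \S\ref{first monoid}, since the labelled graph with involution of $X \fatwedge Y$ is the wedge of those of $X$ and $Y$ at the basepoint vertex, and the loop monoid of such a wedge is manifestly the free product of the loop monoids.
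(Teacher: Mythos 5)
Your argument is essentially the paper's: the paper merely asserts that ``essentially the same argument as for the usual case'' works (or that one can deduce the result from the combinatorial graph description of $\psi_1$), and you have correctly supplied the one observation that makes the classical van Kampen scheme survive the transversal setting, namely that $U_X \cap U_Y = B$ lies in a single open stratum, so the basepoint-adjusting insertions $\alpha\bar{\alpha}$ with $\alpha$ in $B$ really are trivial in $\psi_1$ (whereas $\delta\bar{\delta}$ for a path $\delta$ crossing positive-codimension strata would not be --- this is exactly where a naive transcription of the usual proof would break). One sentence of your injectivity step is imprecise --- an isotopy ``supported in $B$'' cannot push grid edges whose images lie deep inside $N_X$ into $B$; you should instead choose the subdivision so that only the \emph{transition} edges separating $U_X$-rectangles from $U_Y$-rectangles map into $B$ and route all word-comparisons through those --- but this is a routine repair that does not change the approach.
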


\begin{proposition}
Let $X$ be a Whitney stratified manifold. Then the quotient of $\psidot{1}{X}$ obtained by adding  the relation $aa^\dagger =1$ for each $a\in \psidot{1}{X}$ is $\pi_1(X_{\leq 1})$ where $X_{\leq 1}$ is the union of strata of codimension $\leq 1$. 
\end{proposition}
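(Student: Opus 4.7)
My plan is to reduce to the case where $X$ has only codimension $0$ and $1$ strata using the remark $\psidot{1}{X} \cong \psidot{1}{X_{\leq 1}}$, then produce a natural monoid homomorphism $\phi: \psidot{1}{X} \to \pi_1(X)$ by forgetting transversality, show that $\phi$ descends to the quotient in the statement, and verify the descended map is a bijection. After the reduction, $X$ is an open submanifold of its ambient manifold (higher codimension strata lie in the closure of lower codimension ones by the frontier condition), so standard smooth-approximation tools are available.

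A homotopy through transversal maps is, in particular, a continuous based homotopy, so $\phi$ is well-defined and is manifestly a monoid map. Since $a^\dagger$ is represented by $f \circ \rho$ and $\rho$ reverses the orientation of $S^1$, we have $\phi(a^\dagger) = \phi(a)^{-1}$ in $\pi_1(X)$; hence $\phi(aa^\dagger) = 1$ and $\phi$ descends to a map $\bar\phi$ on the quotient. Surjectivity of $\bar\phi$ is standard: given a continuous based loop $\gamma : S^1 \to X$, Whitney approximation yields a smooth loop homotopic to $\gamma$, and Thom transversality perturbs this by a small homotopy to a transversal loop in the same $\pi_1$-class.

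For injectivity, suppose $[f],[g]\in \psidot{1}{X}$ satisfy $\phi[f]=\phi[g]$ and pick a continuous based homotopy $H$ from $f$ to $g$. Approximate $H$ by a smooth map agreeing with $f,g$ on the boundary and transversal as a map of Whitney stratified manifolds, where $S^1 \times [0,1]$ carries the product stratification. The preimage $\Sigma$ of the codimension $1$ strata of $X$ is then a smooth properly embedded $1$-submanifold of $S^1 \times [0,1]$ whose boundary on $S^1 \times \{0,1\}$ encodes the crossings of $f$ and $g$. By Theorem \ref{transversality theorem} the slice $H_t$ is transversal precisely when $t$ is a regular value of the projection $\pi:\Sigma\to [0,1]$; after a further generic perturbation we may assume $\pi$ is Morse with distinct critical values. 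Between adjacent critical values $H$ restricts to a homotopy through transversal maps, and by Proposition \ref{deformation} the corresponding slices represent the same element of $\psidot{1}{X}$. At each critical value a pair of points of $\Sigma$ on a regular slice meets at a fold and vanishes (or is born); this corresponds to inserting or deleting a factor of the form $w \cdot aa^\dagger \cdot w^{-1}$ in the word representing the loop, for some conjugating path $w$. All such insertions are trivial in the quotient, so $[f]=[g]$ there.

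The main obstacle is the local analysis at the folds: one must check that the two branches of $\Sigma$ meeting at a fold really do contribute mutually dagger-dual crossings, that is, they cross the same codimension $1$ stratum with opposite induced co-orientations, so that the inserted word is genuinely $aa^\dagger$ rather than some unrelated pair $ab$. This reduces to a calculation in a Morse chart of the fold, combined with Thom's first isotopy lemma to trivialise a neighbourhood of the critical value and identify the conjugating path $w$ as the arc of the loop running from the basepoint to the fold location. Once this local identification is in hand the argument assembles into a proof that $\bar\phi$ is injective, completing the isomorphism.
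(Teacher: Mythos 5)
Your strategy is the same as the paper's: forget the stratification to get a surjection onto $\pi_1(X_{\leq 1})$ that kills $aa^\dagger$, then for injectivity make a connecting homotopy smooth and transversal with the projection to $[0,1]$ Morse on the preimage $\Sigma$ of the codimension-one strata, and cancel pairs of crossings at the critical points. The ``main obstacle'' you flag is not actually the weak point: the two branches of $\Sigma$ meeting at a fold lie on a single connected component of $\Sigma$, hence map into a single stratum, and in a Morse chart the two intersection points of a nearby regular slice visibly cross that stratum with opposite co-orientations. The paper disposes of this with a picture.

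The genuine gap is the sentence ``All such insertions are trivial in the quotient.'' The move performed at a fold is $u\,c\,\bar c\,v \mapsto u\,v$, where $c$ is a \emph{single} crossing of a codimension-one stratum and $u$ is the initial arc of the loop up to that crossing; $u\,c$ ends in an adjacent open stratum and so is in general not a based loop, there is no ``$w^{-1}$'' available in a monoid, and this cancellation is \emph{not} a consequence of the relations $aa^\dagger=1$ for $a\in\psidot{1}{X}$. Concretely, let $X=S^2$ be stratified by the equator and the two open hemispheres, with basepoint in the north. By the graph description of \S\ref{first monoid}, $\psidot{1}{X}\cong(\N,+)$, generated by the loop $A$ crossing the equator and returning, with $A^\dagger=A$. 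The relations $aa^\dagger=1$ read $A^{2k}=1$ and generate the congruence identifying $m$ with $n$ exactly when $m\equiv n \pmod 2$, so the quotient is $\Z/2$, whereas $\pi_1(S^2)=1$: the class of $A$ survives even though $A$ is null-homotopic, and $\bar\phi$ is surjective but not injective. No refinement of the fold analysis can repair this; what is needed is the stronger, groupoid-level relation cancelling an adjacent pair of opposite crossings of a single stratum anywhere inside a word. The paper's own proof elides exactly the same point --- its ``pair $aa^\dagger$'' is a pair of crossings, not a pair of elements of $\psidot{1}{X}$ --- and the case $X=\mathbb{S}^1$ is misleadingly benign only because there a single crossing happens to be a based loop.
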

\begin{proof}
As remarked earlier $\psidot{1}{X} \cong \psidot{1}{X_{\leq 1}}$. The map $\psidot{1}{X_{\leq 1}} \to \pi_1(X_{\leq 1})$ induced by forgetting the stratification is surjective because any loop is homotopic to a smooth transversal loop. It clearly factors through the quotient by the relations $aa^\dagger =1$. If smooth transversal loops are homotopic in $X_{\leq 1}$ then we can choose the homotopy $S^1 \times [0,1] \to X_{\leq1}$ to be smooth and transversal and such that the projection onto $[0,1]$ is Morse when restricted to the pre-image of the codimension $1$ strata. Such a homotopy can be decomposed as a composition of homotopies each of which is either a homotopy through transversal loops or (for each critical point) a homotopy corresponding to moving a bight of the loop over a stratum.

\vspace{.2in}
\centerline {
\includegraphics[width=1.25in]{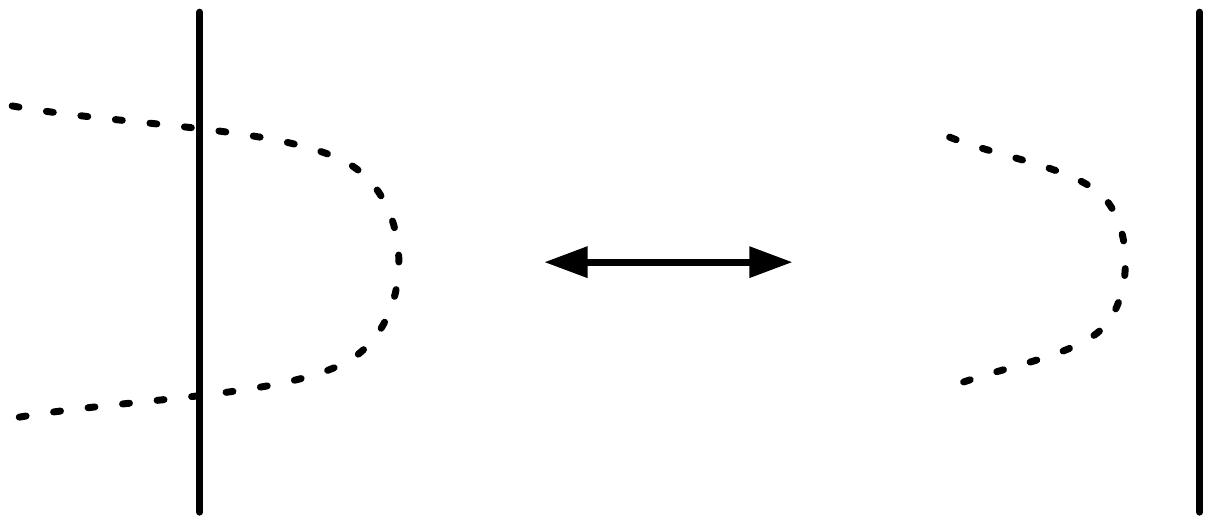}
}
\vspace{.2in}

\noindent These latter homotopies correspond to creating or cancelling a pair $aa^\dagger$. The result follows.
\end{proof}
There is no such relation between quotients of higher monoids obtained by turning duals into inverses and higher homotopy groups, essentially because there are other kinds of critical points in higher dimensions. For a concrete example, turning duals into inverses in $\psidot{2}{\mathbb{S}^1}$ does not yield $\pi_2(S^1)=0$ because the collapse map of a framed circle still represents a non-trivial class. 

\section{Transversal homotopy categories}
\label{transversal homotopy categories}

Let $X$ be a Whitney stratified manifold. As in the last section, all spaces are equipped with a generic basepoint $\bpt$ and all maps are based. Define the $n$th transversal homotopy category $\Psidot{n,n+1}{X}$ for $n\geq 0$ to be the category whose objects are transversal maps $(S^n,B^n) \to (X,\bpt)$. A morphism is represented by a transversal map $$f:(S^n \times [0,1], B^n \times [0,1]) \to (X,\bpt)$$ such that, for some $\epsilon >0$,  
\begin{equation}
\label{bdy cond}
f(p,t) = 
\left\{
\begin{array}{ll}
f(p,0) & t\in [0,\epsilon]\\
f(p,1) & t \in [1-\epsilon, 1].
\end{array}
\right.
\end{equation}
Two such maps represent the same morphism if they are homotopic through such maps relative to a neighbourhood of the boundary $S^n\times \{0,1\}$. Note that (\ref{bdy cond}) forces $f(-,0)$ and $f(-,1)$ to be transversal maps $S^n \to X$ and these are the respective source and target. Composition of morphisms is given by $[f] \circ [g] = [f \cdot g]$ where
$$
(f\cdot g)(p,t) = \left\{
\begin{array}{ll}
f(p,2t) & t\in[0,1/2)\\
g(p, 2t-1) & t \in [1/2,1]
\end{array}
\right.
$$
which is smooth because of the conditions (\ref{bdy cond}).

If $s:X \to Y$ is a stratified normal submersion then there is a functor 
$$
\Psidot{n,n+1}{s} : \Psidot{n,n+1}{X} \to \Psidot{n,n+1}{Y}
$$
given by post-composition, \ie on objects $f \mapsto s\circ f$ and on morphisms $[g] \mapsto [s \circ g]$. If $s$ and $s'$ are homotopic as maps in $\bsns$ then the corresponding functors are naturally isomorphic: $\Psidot{n,n+1}{s} \cong \Psidot{n,n+1}{s}$.

There are canonical equivalences of the categories defined with respect to different choices of the neighbourhood $B^n$. These equivalences are compatible with the functors induced by stratified normal submersions.

\subsection{An example: spheres again}
\label{ex2}

As in \S\ref{ex1} let  $\mathbb{S}^k$ denote the $k$-sphere stratified by a point $p$ and its complement, where $p$ is the antipode of the basepoint $\bpt$. Let 
$$\frtang{}{k}{n,n+1}$$ 
be the category of framed codimension $k$ tangles in dimensions $n$ and $n+1$. By this we mean the category whose objects are framed codimension $k$ closed submanifolds of $S^n -B^n$ and whose morphisms are ambient isotopy classes of framed codimension $k$ submanifolds of $(S^n-B^n) \times [0,1]$. The latter submanifolds are required to be of the form $M_0\times [0,\epsilon)$ and $M_1 \times (1-\epsilon,1]$ in neighbourhoods of $S^n \times \{0\}$ and $S^n\times \{1\}$ respectively. The framed submanifolds $M_0$ and $M_1$, which may be empty, represent the source and target respectively. The isotopies must  fix 
$$
\left(B^n \times [0,1]\right) \cup \left( S^n\times ([0,\epsilon) \cup (1-\epsilon,1]) \right).
$$
Composition is given by gluing cylinders $S^n \times [0,1]$ along their boundary components and re-parameterising.

Taking a transversal map $S^n \to \mathbb{S}^k$ to the corresponding induced stratification of $S^n$, and framing the codimension $k$ stratum which is the inverse image of $p$ by pulling back a framing of $p\in \mathbb{S}^k$, defines a functor  
$$
\iota : \Psidot{n,n+1}{\mathbb{S}^k} \to \frtang{}{k}{n,n+1}.
$$
It is well-defined on morphisms by Remark \ref{smooth deformation}. There is also a functor in the other direction
$$
\kappa :  \frtang{}{k}{n,n+1} \to \Psidot{n,n+1}{\mathbb{S}^k}
$$
given by choosing collapse maps for each framed submanifold and bordism. We can make these choices compatibly so that the collapse map for a bordism agrees with the chosen ones for the boundaries. Together $\kappa$ and $\iota$ define an equivalence: the existence of collapse maps for framed submanifolds shows that $\iota$ is essentially surjective --- indeed, surjective --- and a version of Corollary \ref{IK isom}, carried out relative to the faces $S^n\times \{0,1\}$, shows that is is fully faithful. 

For a concrete example, take $k=n=2$. The category $\frtang{}{2}{2,3}$ has finite collections of framed points in $S^2-B^2$ as objects, with ambient isotopy classes of framed curves in $(S^2-B^2)\times [0,1]$ possibly with boundary on the faces $S^2\times\{0,1\}$ as morphisms. 

There is also an equivalence $\Psidot{n,n+1}{\fatwedge_r \mathbb{S}^k} \simeq \frtang{r}{k}{n,n+1}$
from the transversal homotopy category of a fat wedge of spheres to the category of $r$-coloured framed codimension $k$ tangles in  dimensions $n$ and $n+1$, given by a  `coloured' version of the above argument.

\subsection{Structure of transversal homotopy categories}

Transversal homotopy categories have a rich structure, independent of the specific $X$. This structure is inherited from the transversal homotopy categories of spheres, and fat wedges of spheres. The idea is simple: given a suitable map $\alpha:S^n \to S^n$ we can define an endo-functor of $\Psidot{n,n+1}{X}$ by pre-composing with $\alpha$. The details, ensuring that all maps are transversal and so forth, are a little fiddly. For this reason we explain the construction for plain-vanilla homotopy theory and then state the conditions required for it to work in the transversal setting.

Let $(A,\bpt)$ be a CW complex with basepoint. Write $\Pi_{n,n+1}(A)$ for the category whose objects are based continuous maps $S^n\to A$ and whose morphisms are homotopy classes of maps
$$
(S^n\times[0,1], \{\bpt\}\times [0,1]) \to (A,\bpt)
$$
where the homotopies are relative to $S^n\times\{0,1\}$. The source and target are the restrictions to the slices $S^n\times\{0\}$ and $S^n\times\{1\}$ respectively, and composition is given by gluing cylinders. For example, in this notation, the fundamental groupoid is $\Pi_{0,1}(A)$.

A continuous map $\alpha:S^n \to \vee_r S^n$ determines a functor $$\alpha^* : \left(\Pi_{n,n+1}(A)\right)^k \to \Pi_{n,n+1}(A)$$ by pre-composition: on objects $\alpha^*(f_1,\ldots,f_r) = (f_1\vee \cdots\vee f_r)\circ \alpha$ and on morphisms
$$
\alpha^*([g_1], \ldots,[g_r]) = \left[ (g_1\vee \cdots\vee g_r)\circ (\alpha \times [0,1]) \right].
$$
(There is a mild abuse of notation here in which we write $g_1\vee \cdots \vee g_k$ for the map $\vee_r S^n\times [0,1]\to X$ defined by the $g_i$.) This definition is independent of the representatives $g_i$ chosen.

Similarly a continuous homotopy of based maps
$\beta: S^n \times [0,1] \to \vee_r S^n$
determines  a natural transformation $\beta^*$ from $\beta_0^*$ to $\beta_1^*$, where $\beta_t : S^n\times\{t\} \to S^n$ is the restriction to a slice. Namely, to each object $(f_1, \ldots, f_r)$ we associate the morphism $(f_1\vee \cdots\vee f_r) \circ \beta$. If $([g_1],\ldots,[g_r])$ is a morphism in $\Pi_{n,n+1}(A)^k$, then the composite
\begin{equation}
\label{naturality}
\xymatrix{
S^n\times[0,1]^2  \ar[rr]^{\beta \times [0,1]} &&  \vee_rS^n \times[0,1]\ar[rr]^{\qquad g_1\vee\cdots\vee g_r} && A
}
\end{equation}
provides a homotopy which shows that $\beta^*$ is a natural transformation. It depends only on the homotopy class of $\beta$ relative to the ends $S^n\times\{0,1\}$. Moreover, concatenating homotopies corresponds to composing natural transformations. We have proved
\begin{lemma}
Pre-composition defines a functor
$$
\Pi_{n,n+1}(\vee_r S^n) \to \left[ \left(\Pi_{n,n+1}(A)\right)^r, \Pi_{n,n+1}(A) \right]
$$
where $[\cat{C},\cat{D}]$ is the category of functors $\cat{C}\to \cat{D}$ and natural transformations between them. 
\end{lemma}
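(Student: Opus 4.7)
\medskip

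\noindent\textbf{Proof proposal.} The statement packages three separate well-definedness/functoriality checks into one, so the plan is to verify each in turn, relying on the explicit constructions of $\alpha^\ast$ and $\beta^\ast$ given in the paragraphs preceding the lemma. Throughout, the key point is that pre-composition with a fixed (homotopy of) map out of $S^n$ commutes with everything that goes on in the second factor of cylinders $S^n\times[0,1]$ or $S^n\times[0,1]^2$.

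First, I would check that for each object $\alpha:S^n\to \vee_r S^n$ of $\Pi_{n,n+1}(\vee_r S^n)$ the prescription $\alpha^\ast$ is genuinely a functor $(\Pi_{n,n+1}(A))^r\to \Pi_{n,n+1}(A)$. On objects this is a tautology. On morphisms, well-definedness on homotopy classes follows because a relative homotopy between representatives $(g_i)$ and $(g_i')$ can be pre-composed with $\alpha\times[0,1]^2$ to give a relative homotopy between $(g_1\vee\cdots\vee g_r)\circ(\alpha\times[0,1])$ and $(g_1'\vee\cdots\vee g_r')\circ(\alpha\times[0,1])$. The identity $r$-tuple is sent to the identity morphism because pre-composition preserves constancy in the $[0,1]$-direction, and composition is preserved because pre-composition commutes with the gluing/re-parameterisation on $S^n\times[0,1]$ used to define composition in $\Pi_{n,n+1}(A)$.

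Next I would check that for each morphism $[\beta]$ of $\Pi_{n,n+1}(\vee_r S^n)$ the prescription $\beta^\ast$ is a natural transformation $\beta_0^\ast\Rightarrow \beta_1^\ast$, and is independent of the representative $\beta$. The candidate component at an object $(f_1,\ldots,f_r)$ is the morphism $(f_1\vee\cdots\vee f_r)\circ\beta$ in $\Pi_{n,n+1}(A)$; its source and target are $\beta_0^\ast(f_1,\ldots,f_r)$ and $\beta_1^\ast(f_1,\ldots,f_r)$ by inspection. Naturality in the argument is exactly the commuting square furnished by the composite (\ref{naturality}) in the text, viewed as a map $S^n\times[0,1]^2\to A$: restricting to the two pairs of adjacent edges of the square $[0,1]^2$ gives the two composites that must agree, and the square itself is the required relative homotopy between them. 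Independence of the representative $\beta$ is analogous to the argument on objects above, pre-composing a relative homotopy $\beta\simeq\beta'$ with the $g_i$.

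Finally I would verify that the assignment $\alpha\mapsto \alpha^\ast$, $[\beta]\mapsto \beta^\ast$ preserves identities and composition in $\Pi_{n,n+1}(\vee_r S^n)$. The identity morphism on $\alpha$ is represented by $\alpha\times\mathrm{id}_{[0,1]}$; pre-composing with this and using the boundary conditions shows that its associated natural transformation has components equal to the identity morphism on each $\alpha^\ast(f_1,\ldots,f_r)$. For composition, the concatenation $\beta\cdot\beta'$ of two homotopies is pre-composed with $(g_1\vee\cdots\vee g_r)$ in a way that commutes with concatenation in the $[0,1]$-direction, so $(\beta\cdot\beta')^\ast$ agrees componentwise with the vertical composition of natural transformations $\beta^\ast$ and $\beta'^\ast$; this is exactly the remark already made in the text that concatenation of homotopies corresponds to composing natural transformations.

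The only mildly delicate step is the naturality verification, i.e.\ producing the relative homotopy witnessing that $\beta^\ast$ is natural; but once the square $S^n\times[0,1]^2\to A$ built from $g_i$ and $\beta$ is written down explicitly, all faces have the required form thanks to the boundary conditions (\ref{bdy cond}) on morphisms in $\Pi_{n,n+1}(A)$, so the remainder is routine bookkeeping rather than a genuine obstacle.
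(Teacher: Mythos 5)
Your proposal is correct and follows essentially the same route as the paper, which proves the lemma by exactly the preceding construction of $\alpha^*$ and $\beta^*$, the square $S^n\times[0,1]^2\to A$ witnessing naturality, and the observation that concatenation of homotopies corresponds to vertical composition; your version merely makes the routine well-definedness checks explicit. The only cosmetic slip is your appeal to the collar conditions (\ref{bdy cond}), which belong to the transversal categories $\Psidot{n,n+1}{X}$ rather than to the purely continuous $\Pi_{n,n+1}(A)$, where gluing of homotopy classes needs no such condition.
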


The majority of the construction carries over to the transversal homotopy setting provided we impose suitable conditions on $\alpha$ and $\beta$. The required conditions are that they should be smooth and that
\begin{enumerate}
\item the restriction of $\alpha$ to the inverse image of  $\sqcup_r(S^n -B^n)$ is a submersion,
\item the restriction of $\beta$, and of the slices $\beta_0$ and $\beta_1$, to the inverse image of  $\sqcup_r(S^n -B^n)$ are submersions.
\end{enumerate}
These ensure that the composites of $\alpha,\beta$ and the slices $\beta_0$ and $\beta_1$ with based transversal maps $\vee_rS^n \to X$ are transversal. There is one important difference however, which is that $\beta^*$ need no longer  be a {\em natural} transformation but merely a transformation of functors.\footnote{Here by a \defn{transformation} $t$ of functors $F,G:\cat{C}\to \cat{D}$ we mean simply a collection of morphisms $t(c):F(c) \to G(c)$ in $\cat{D}$ for each object $c$ of $\cat{C}$. What we call `transformations' are sometimes termed `infranatural transformations'.}  We denote the category of functors $\cat{C}\to\cat{D}$ and not-necessarily-natural transformations between them by $[\cat{C},\cat{D}]_+$.

To see why $\beta^*$ need not be natural, note that under the conditions above the map in (\ref{naturality}) is transversal, but is not necessarily a homotopy through transversal maps. However, if we impose the stronger condition that {\em each} slice $\beta_t$ restricts to a submersion on the inverse image of  $\sqcup_r(S^n -B^n)$ then $\beta^*$ is natural. For in this case the restrictions to slices
$$
\xymatrix{
S^n \times \{t\} \times [0,1]  \ar[rr]^{\beta_t \times [0,1]} &&  \vee_rS^n \times[0,1]\ar[rr]^{\qquad g_1\vee\cdots\vee g_r} && A
}
$$
in (\ref{naturality}) are transversal. Since transversality is an open condition we can find a family of transversal maps interpolating between the two ways around the boundary of  $S^n\times[0,1]$.
Indeed, under this stronger condition $\beta^*$ is a natural {\em isomorphism} because the morphism $[ (g_1\vee\cdots\vee g_r) \circ \beta]$ is a homotopy through transversal maps and therefore represents an isomorphism in $\Psidot{n,n+1}{X}$.

\begin{proposition}
\label{sub functors}
As in the previous section, let $\mathbb{S}^n$ denote the $n$-sphere stratified by a point and its complement. For each $r \geq 0$ there is a functor 
$$
 \kappa\iota(-)^* : \Psidot{n,n+1}{\fatwedge_r \mathbb{S}^n} \to \trans{ \left(\Psidot{n,n+1}{X}\right)^r }{ \Psidot{n,n+1}{X} }
$$
where $\iota$ and $\kappa$ are the functors defined in  \S\ref{ex2}.  Moreover, if $$h: S^n \times [0,1] \to \fatwedge_r \mathbb{S}^n$$ is a homotopy through transversal maps then $\kappa\iota(h)^*$ is a {\em natural isomorphism} of functors.
\end{proposition}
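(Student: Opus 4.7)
The overall plan is to exploit that $\kappa\circ\iota$ replaces an arbitrary transversal map (or bordism) by a collapse map, and that collapse maps automatically satisfy the submersion conditions singled out in the paragraphs just before the proposition. This reduces the proof to a routine check of functoriality plus a parametric enhancement of the collapse-map construction to handle the homotopy-through-transversals case.

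For an object $f:(S^n,B^n)\to(\fatwedge_r\mathbb{S}^n,\bpt)$, the map $\kappa\iota(f)$ is, by construction, a collapse map for the coloured framed submanifold $\iota(f)$ (using the coloured version of Lemma \ref{collapse lemma} recorded in \S\ref{ex1}). Property (2) of that lemma is exactly the condition numbered (1) in the preparatory discussion, so pre-composition with $\kappa\iota(f)$ defines a functor $(\Psidot{n,n+1}{X})^r\to\Psidot{n,n+1}{X}$. For a morphism $[h]$, one chooses, as in \S\ref{ex2}, $\kappa\iota(h)$ to be a collapse map for the bordism $\iota(h)$ whose boundary slices agree with the collapse maps previously picked for the source and target. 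The submersion clause of the lemma again yields condition (2) on $\beta=\kappa\iota(h)$, so pre-composition defines a transformation $\kappa\iota(h)^*$ between the functors associated to the source and target. Independence of the class $[h]$ is built into the construction of $\kappa$: ambient isotopic bordisms produce transversally homotopic collapse maps.

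To verify functoriality, observe that the concatenation $\kappa\iota(h_2)\cdot\kappa\iota(h_1)$ is, after smoothing, a collapse map for the glued bordism $\iota(h_2\cdot h_1)$ with matching boundary data. The uniqueness clause of Lemma \ref{collapse lemma} identifies it with $\kappa\iota(h_2\cdot h_1)$ as a morphism in $\Psidot{n,n+1}{\fatwedge_r\mathbb{S}^n}$, and pre-composing with transversally homotopic maps induces the same transformation, so the two composite transformations agree. Identity morphisms are handled in the same way.

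For the final claim, suppose $h$ is a homotopy through transversal maps, so that each slice $h_t$ is itself transversal. By Theorem \ref{transversality theorem} the projection $\iota(h)\to[0,1]$ is a submersion with fibres $\iota(h_t)$, so one can choose a tubular neighbourhood of $\iota(h)$ in $S^n\times[0,1]$ that is fibred over $[0,1]$. Running the construction sketched in the proof of Lemma \ref{collapse lemma} relative to such a tubular neighbourhood produces a collapse map $\kappa\iota(h)$ whose slice over each $t$ is itself a collapse map for $\iota(h_t)$. In particular every slice satisfies the slicewise submersion condition highlighted just before the proposition, which forces $\kappa\iota(h)^*$ to be natural (the interpolation (\ref{naturality}) now has each slice transversal, and transversality is an open condition, so it extends to a homotopy through transversal maps) and moreover an isomorphism on each component (the homotopy through transversal maps represents an invertible morphism in $\Psidot{n,n+1}{X}$). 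The one non-routine ingredient is this parametric refinement of the collapse-map construction, which is where the main technical obstacle lies; it requires carrying the partition-of-unity and tubular-neighbourhood choices of Lemma \ref{collapse lemma} through in a way compatible with the projection to the parameter interval, but no new idea beyond the standard parametric Pontryagin--Thom argument.
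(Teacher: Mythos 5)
Your proposal is correct and follows essentially the same route as the paper: replace transversal maps and bordisms by collapse maps via $\kappa\iota$, observe that property (2) of Lemma \ref{collapse lemma} supplies exactly the submersion conditions required by the pre-composition construction, and for the last claim note that a collapse map for the bordism of a homotopy through transversal maps can be chosen with every slice a submersion onto $\sqcup_r(S^n-B^n)$, whence naturality and invertibility. Your parametric refinement of the collapse-map construction (fibring the tubular neighbourhood over $[0,1]$) is precisely the detail the paper leaves implicit in its one-line justification of that slicewise submersion property.
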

\begin{proof}
We use the functor $\kappa\iota$ to replace a transversal map $S^n \to\fatwedge_r \mathbb{S}^n$ with a collapse map 
$$
(S^n,B^n) \to (\vee_rS^n,\bpt)
$$
whose restriction to the inverse image of  $\sqcup_r(S^n -B^n)$ is a submersion. Then we apply the above construction of functors and not-necessarily-natural transformations.  

For the last part, if $h: S^n \times [0,1] \to \fatwedge_r \mathbb{S}^n$ is a homotopy through transversal maps then the collapse map $S^n\times [0,1] \to \vee_r S^n$ for the associated bordism has the property that each slice is a submersion onto $\sqcup_r(S^n -B^n)$. It follows that the corresponding transformation is a natural isomorphism.
\end{proof}

The force of this proposition is that it greatly simplifies the process of endowing transversal homotopy categories with structure. All that is required is to exhibit a few objects, morphisms and equations between morphisms in the transversal homotopy categories of fat wedges of spheres. We exhibit these diagrammatically as abstract versions of objects and morphisms in the coloured framed bordism category. Objects are represented by collections of coloured points, each with either a left or right pointing arrow indicating the framing (there are exactly two choices when $n> 0$), placed on a dotted line representing the ambient space. An empty dotted line represents the constant map to the basepoint. A single point with a right pointing arrow

\centerline {
\includegraphics[scale=0.5]{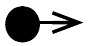}
}

\noindent represents the identity map $1: S^n\to S^n$, and with a left pointing arrow the reflection $\rho:S^n\to S^n$. Concatenations of these represent composites of $\mu : S^n \to S^n \vee S^n$, the bracketing is indicated by proximity. For example

\centerline {
\includegraphics[scale=0.5]{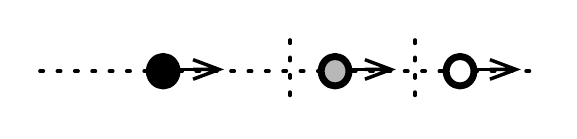}
}

\noindent represents $(1 \vee \mu)\circ \mu$. Morphisms are represented by coloured bordisms, equipped with arrows to indicate the framing. We read from top to bottom, so that, for example, 

\centerline {
\includegraphics[scale=0.5]{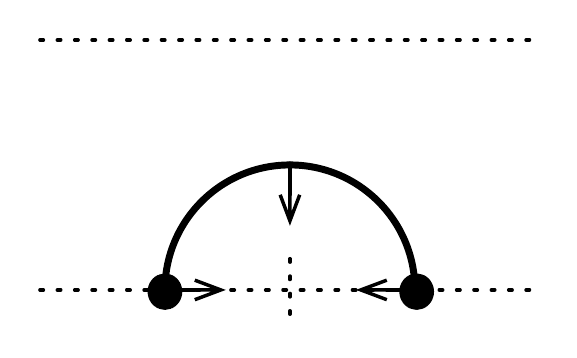}
}

\noindent represents a morphism $1 \to (1\vee \rho)\circ \mu$. Equations between morphisms are given by isotopies. Morphisms with no critical points for the horizontal projection represent homotopies through transversal maps.

\begin{theorem}
\label{structure thm}
For $n\geq1$ the transversal homotopy category $\Psidot{n,n+1}{X}$ is rigid monoidal, with the left and right duals given by the same functor. For $n\geq2$ there are braiding and balancing natural isomorphisms, giving $\Psidot{n,n+1}{X}$ the structure of a ribbon category. For $n\geq 3$ the braiding is symmetric and the balancing trivial. (See, for example, \cite{MR1797619} for definitions of rigid monoidal and ribbon categories etc.) 
\end{theorem}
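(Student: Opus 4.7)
The plan is to use Proposition \ref{sub functors} systematically: every structural functor, natural transformation and coherence isomorphism will be specified by an object, morphism or homotopy through transversal maps in $\Psidot{n,n+1}{\fatwedge_r \mathbb{S}^n}$ for a suitable $r$, which by \S\ref{ex2} is equivalent to the category $\frtang{r}{n}{n,n+1}$ of $r$-coloured framed tangles. So the theorem reduces to exhibiting some specific framed tangles and verifying their defining identities as ambient isotopy classes of bordisms.

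For the monoidal structure, valid for all $n \geq 1$, I would take the tensor product to be $\kappa\iota(\mu)^*$, the unit to be the constant map to $\bpt$, and the associator and unitors to be obtained by applying $\kappa\iota(-)^*$ to homotopies through transversal maps in $\Psidot{n,n+1}{\fatwedge_3 \mathbb{S}^n}$ and $\Psidot{n,n+1}{\mathbb{S}^n}$ that reassociate three coloured points or slide a point past an empty strand. By the last clause of Proposition \ref{sub functors} these are natural isomorphisms, and the pentagon and triangle axioms reduce to self-evident ambient isotopies of labelled point-bordisms. For rigidity I would take the duality functor to be $\kappa\iota(\rho)^*$ where $\rho$ is reflection through the basepoint, and realise the evaluation and coevaluation as the cap and cup bordisms in $\Psidot{n,n+1}{\mathbb{S}^n}$ joining an oppositely framed pair of points to the empty object. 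The zig-zag identities are the standard straightening isotopies of $1$-dimensional bordisms, and left and right duals agree because the same $\rho$ is used on both sides.

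For $n \geq 2$ the ambient bordism space has dimension $n+1 \geq 3$, so two framed points in $S^n - B^n$ can be interchanged by a braided bordism in $(S^n-B^n)\times[0,1]$. Realising this as a homotopy through transversal maps between $\mu$ and $\tau \circ \mu : S^n \to \fatwedge_2 \mathbb{S}^n$, where $\tau$ swaps the two summands, and pushing through $\kappa\iota(-)^*$ produces the braiding; the hexagon axioms then follow from standard three-strand braid isotopies. The balancing comes from a bordism that rotates the normal framing of a single point by $2\pi$, and the ribbon compatibility reduces to a direct tangle-diagram identity. For $n \geq 3$ the bordism space has dimension $\geq 4$, enough room to isotope any braid of codimension-$n$ strands to the trivial one, showing the braiding is symmetric, and analogously enough room to isotope a $2\pi$ framing rotation of a point to the identity, so the balancing is trivial.

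The main obstacle I anticipate is technical rather than conceptual: by Proposition \ref{sub functors} one obtains a genuine \emph{natural} transformation only when the homotopy in question passes through transversal maps at every time, and this is a strictly stronger condition than a generic ambient isotopy of tangles. So each of the tangle isotopies above must be realised in a form where, at every time $t$, the bordism slice really is a framed submanifold and the relevant restriction is a submersion. The remedy is to work level-wise using ambient isotopy extension together with the openness of transversality established in \S\ref{transversality}: first choose a candidate isotopy with generic Morse-theoretic behaviour of the height function, then perturb to eliminate any non-transversal slices, preserving transversality throughout by its openness. Once this is established, the remaining coherence diagrams commute automatically because they become equalities of isotopy classes of bordisms.
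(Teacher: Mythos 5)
Your proposal follows essentially the same route as the paper: the structure is generated by applying $\kappa\iota(-)^*$ to specific objects and morphisms of $\Psidot{n,n+1}{\fatwedge_r\mathbb{S}^n}$ (i.e.\ to coloured framed point-tangles built from $\mu$ and $\rho$), the coherence axioms are checked as ambient isotopies of bordisms, and the extra room in dimensions $n+1\geq 3$ and $\geq 4$ gives the braiding, balancing, symmetry and triviality claims exactly as in the paper's diagrammatic argument.

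One caveat about your final paragraph: the blanket remedy of ``perturb to eliminate any non-transversal slices'' cannot work for the cup and cap morphisms $\eta$ and $\epsilon$, since a bordism from the empty submanifold to a pair of points necessarily has a Morse critical point for the height function and so cannot be realised as a homotopy through transversal maps. These unit and counit morphisms are therefore genuinely \emph{non-natural} transformations --- which is precisely why Proposition \ref{sub functors} lands in $\trans{\left(\Psidot{n,n+1}{X}\right)^r}{\Psidot{n,n+1}{X}}$ rather than the category of functors and natural transformations --- and the paper records this explicitly. This does no harm, because rigidity only requires the zig-zag identities for the morphisms $\eta_a$, $\epsilon_a$ at each object, not their naturality; meanwhile the associator, unitors, braiding and balancing come from critical-point-free bordisms, i.e.\ honest homotopies through transversal maps, so for those no perturbation is needed and naturality is automatic from the last clause of Proposition \ref{sub functors}.
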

\begin{proof}
The tensor product $\otimes$ and associativity natural isomorphism $\alpha$ are given respectively by the object and morphism below: 

\centerline {
\includegraphics[scale=0.5]{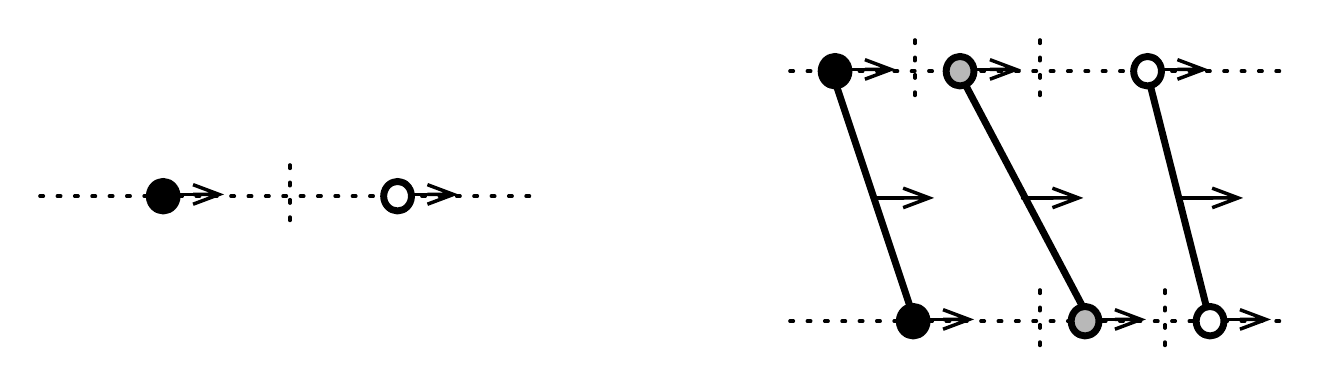}
}

\noindent The unit $1$ for the monoidal structure is the empty diagram with unit natural isomorphisms given by the following two morphisms

\centerline {
\includegraphics[scale=0.5]{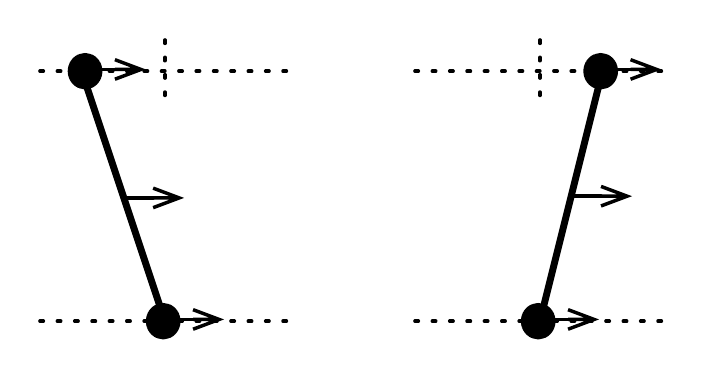}
}

\noindent To complete the monoidal structure two equations, the pentagon and triangle coherence axioms, must be satisified. These are easy to draw but  we omit them to save space. The (left and right) dual $f\mapsto f^\vee$ is given by the functor corresponding to 

\centerline {
\includegraphics[scale=0.5]{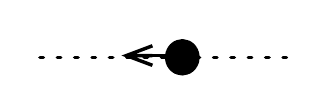}
}

\noindent The unit morphisms $\epsilon^L$ and $\epsilon^R$ for the left and right dual are shown respectively on the left and right below. Note that these morphisms correspond to {\em non-natural} transformations. 

\centerline {
\includegraphics[scale=0.5]{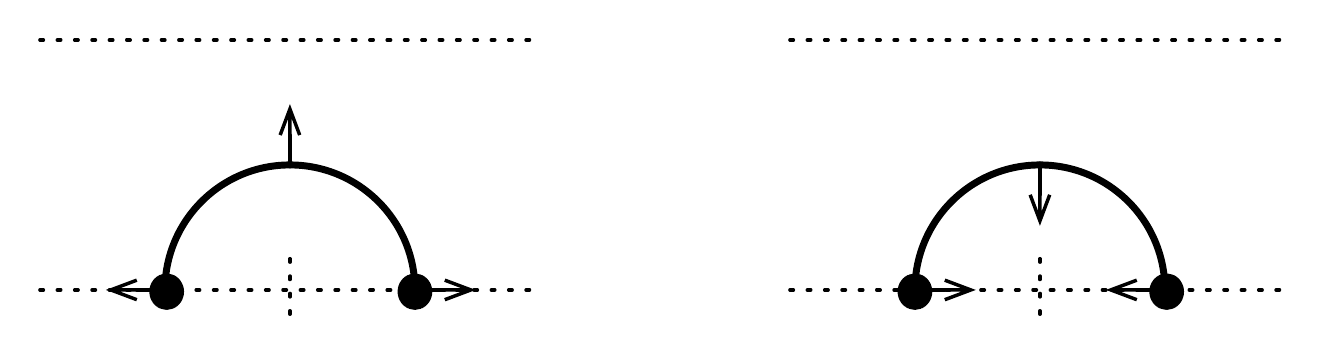}
}

\noindent The counit morphisms $\eta_L$ and $\eta^R$ for the left and right dual are given by:

\centerline {
\includegraphics[scale=0.5]{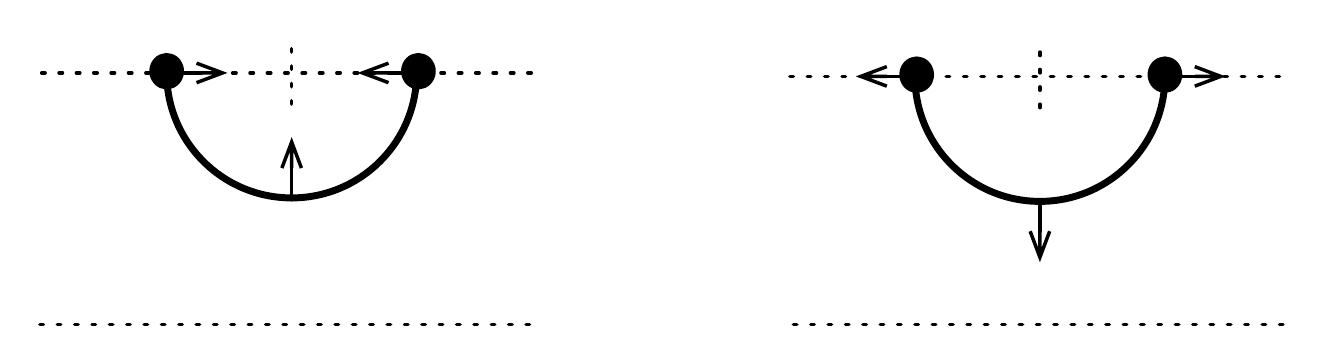}
}

\noindent To complete the proof that $\Psidot{n,n+1}{X}$ is rigid we need to show the rigidity axioms are satisified. One example is shown below --- the isomorphisms $1\otimes a \cong a$ and $a\otimes 1\cong a$ have been suppressed --- the other three are obtained by reflecting in the two axes.

\centerline {
\includegraphics[scale=0.5]{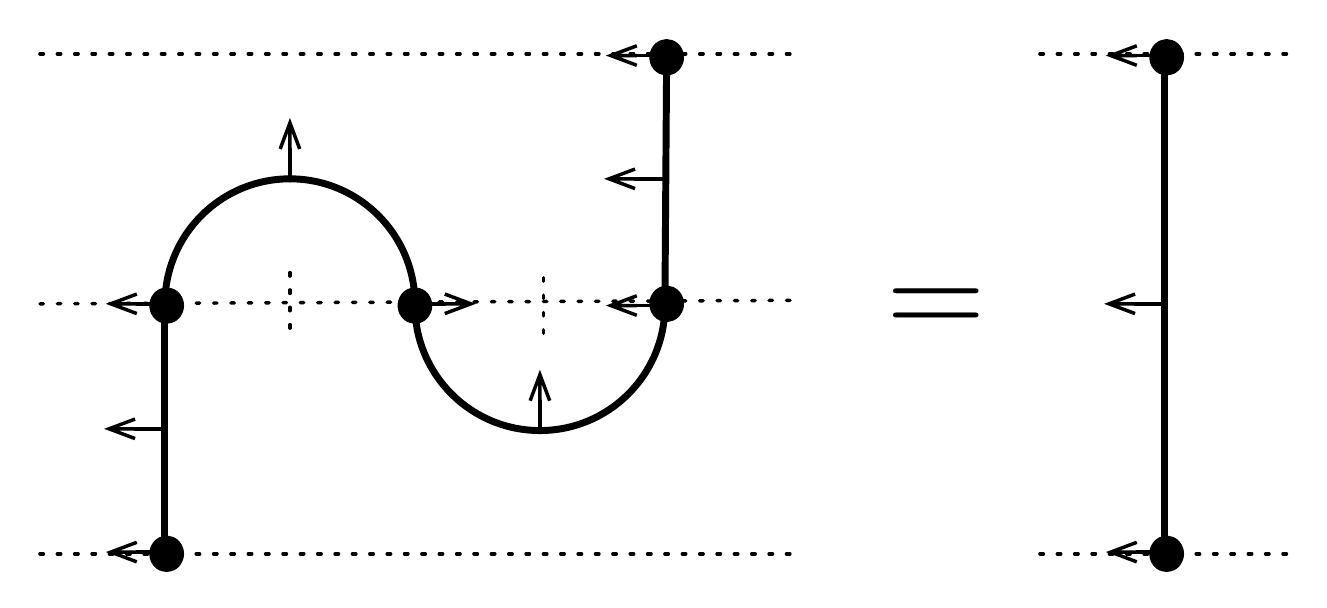}
}

When $n\geq 2$ there is a braiding natural isomorphism $\beta$ arising from the diagram below. The hexagon axioms relating this (and its inverse) to the associativity natural isomorphism are immediate.

\centerline {
\includegraphics[scale=0.5]{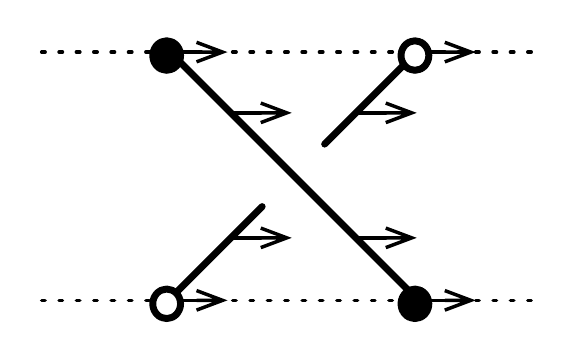}
}

\noindent Also when $n\geq 2$, there is a balancing natural isomorphism (or twist) $\tau$ corresponding to 

\centerline {
\includegraphics[scale=0.5]{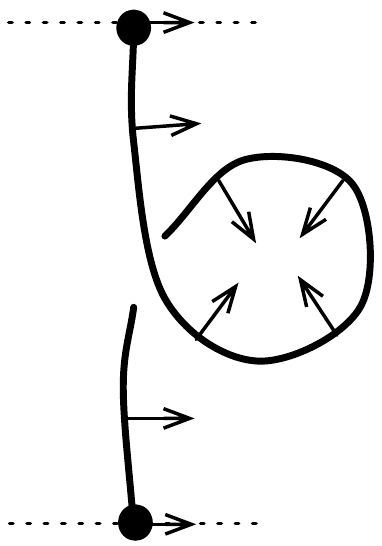}
}

\noindent satisfying the required balancing axioms. Finally, when $n \geq 3$ it is geometrically clear that the braiding isomorphism is an involution, \ie that the monoidal structure is symmetric, and that the balancing isomorphism is the identity. (Note that the double dual corresponds to $\rho\circ \rho = 1$ and so is the identity functor, and not merely naturally isomorphic to it.) 
\end{proof}
\begin{remark}
We made two choices in defining the above structure, the maps $\rho$ and $\mu$. Different choices lead to equivalent structures.
\end{remark}

There is one important piece of structure on transversal homotopy categories which does not arise in the above way, although it is still defined by pre-composition. It is the involutive anti-equivalence
$$
\dagger : \Psidot{n,n+1}{X}^{op} \longrightarrow \Psidot{n,n+1}{X}
$$
which is the identity on objects and is defined on morphisms by $[g]^\dagger= [g \circ \sigma]$ where 
$$
\sigma:S^n\times [0,1] \to S^n\times[0,1] : (p,t) \mapsto (p,1-t).
$$

\begin{proposition}
For $n,r\geq 0$ the involutive anti-equivalence $\dagger$ commutes with the functor $\kappa\iota(-)^*$. Furthermore
\begin{enumerate}
\item identities are unitary: $\id_f^\dagger = \id_f$,
\item units and counits for the left and right duals are adjoint: $(\epsilon_f^L)^\dagger = \eta_f^R$ and $(\epsilon_f^R)^\dagger = \eta_f^L$,
\item the braiding is unitary: $(\beta_{f\otimes g})^\dagger = \beta_{f\otimes g}^{-1}$,
\item and the balancing is unitary: $(\tau_f)^\dagger = \tau_f^{-1}$. 
\end{enumerate}
\end{proposition}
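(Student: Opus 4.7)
The plan is to establish the $\dagger$-commutation first and then deduce each of (1)--(4) by recognising the structural morphism as $\kappa\iota(\phi)^*$ for an explicit diagram $\phi$ in some $\Psidot{n,n+1}{\fatwedge_r\mathbb{S}^n}$, at which point the commutation reduces the computation of its dagger to reflecting $\phi$ in the horizontal (time) axis.

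For the commutation, suppose $\phi$ is a morphism in $\Psidot{n,n+1}{\fatwedge_r\mathbb{S}^n}$ represented by a transversal map $h:S^n\times [0,1] \to \fatwedge_r\mathbb{S}^n$, so that $\phi^\dagger$ is represented by $h\circ \sigma$. The key geometric observation is that $\sigma$ is a stratified diffeomorphism of $S^n\times [0,1]$ and commutes with $\alpha\times \mathrm{id}$ for any $\alpha: S^n \to \fatwedge_r \mathbb{S}^n$; consequently $\kappa\iota(h)\circ \sigma$ is a valid collapse-map representative for the time-reversed bordism determined by $h\circ\sigma$. Evaluating at an $r$-tuple $(f_1,\ldots,f_r)$ of transversal maps into $X$ then gives
\[
\kappa\iota(\phi^\dagger)^*(f_1,\ldots,f_r) = \bigl[(f_1\vee\cdots\vee f_r)\circ \kappa\iota(h)\circ \sigma\bigr] = \bigl(\kappa\iota(\phi)^*(f_1,\ldots,f_r)\bigr)^\dagger,
\]
which is the required commutation componentwise on the transformations of functors.

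With this in hand, each of (1)--(4) becomes a short diagrammatic check using the pictures from the proof of Theorem \ref{structure thm}. The constant cylinder representing $\id_f$ is invariant under the horizontal reflection $\sigma$, giving (1). The cup representing $\epsilon_f^L$ reflects to the cap representing $\eta_f^R$, and the cup representing $\epsilon_f^R$ reflects to the cap representing $\eta_f^L$, giving (2). The crossing representing $\beta_{f\otimes g}$ reflects to the opposite crossing; the claim $(\beta_{f\otimes g})^\dagger = \beta_{f\otimes g}^{-1}$ then amounts to showing that the composite of a crossing with its mirror image is isotopic, through \emph{transversal} homotopies, to a pair of parallel strands. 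This yields (3), and an entirely analogous argument applied to the twist yields (4).

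The main obstacle is the point raised in (3) and (4): while the required isotopies clearly exist at the topological level --- they are the standard `unknotting' moves available in sufficiently high dimension --- one must ensure each can be chosen so that every intermediate slice is transversal. This is guaranteed by combining the openness of the transversality condition in $C^\infty(S^n\times[0,1],\fatwedge_r\mathbb{S}^n)$ with the stratified isotopy result of Proposition \ref{deformation}, but it is the step where the transversal character of the theory has to be used carefully rather than merely invoked.
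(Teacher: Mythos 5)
Your proposal is correct and follows the same route as the paper, whose entire proof is the one-line observation that $\kappa\iota(-)^* \circ \dagger = \dagger \circ \kappa\iota(-)^*$ follows from the definitions and that (1)--(4) are consequences; you have simply filled in the details the paper leaves implicit. The only small refinement worth making is in (3) and (4): rather than appealing to openness of transversality and Proposition \ref{deformation}, it is cleaner to note that the braiding and balancing are represented by homotopies \emph{through} transversal maps (diagrams with no critical points for the horizontal projection), so the standard path-reversal argument runs entirely inside the space of transversal maps and immediately gives $[h\circ\sigma]=[h]^{-1}$.
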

\begin{proof}
The fact that $\kappa\iota(-)^* \circ \dagger = \dagger \circ \kappa\iota(-)^*$ follows directly from the definitions. The other identities are consequences of this.
\end{proof}
\begin{corollary}
If $s:(S,\bpt) \to (Y,\bpt)$ is a stratified normal submersion of Whitney stratified manifolds then the functor
$$
\Psidot{n,n+1}{s} : \Psidot{n,n+1}{X} \to \Psidot{n,n+1}{Y} 
$$
preserves all of the structure defined above, \ie 
\begin{itemize}
\item it commutes with $\dagger$,
\item when $n\geq 1$ it is a (strict) monoidal functor commuting with the dual $\vee$ and preserving unit and counit morphisms,
\item when $n\geq 2$  it  preserves braiding and balancing isomorphisms,
\item and when $n\geq 3$ it is symmetric monoidal.
\end{itemize}
\end{corollary}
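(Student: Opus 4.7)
The proof is almost entirely formal, resting on the observation that $\Psidot{n,n+1}{s}$ is defined by \emph{post}-composition with $s$ while every structural functor and transformation in the preceding results is defined by \emph{pre}-composition of maps into $X$ with some fixed smooth map $S^n \to \vee_r S^n$ (or $S^n \times [0,1] \to \vee_r S^n$). Post-composition and pre-composition always commute, so no obstruction arises at any stage.

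Explicitly, for any class $[\alpha] \in \Psidot{n,n+1}{\fatwedge_r \mathbb{S}^n}$ with representative collapse map $\alpha = \kappa\iota([\alpha])$, and any $(f_1, \ldots, f_r)$ in $\Psidot{n,n+1}{X}^r$, one has
\[
\Psidot{n,n+1}{s}\bigl( \kappa\iota([\alpha])^* (f_1, \ldots, f_r) \bigr) = s \circ (f_1 \vee \cdots \vee f_r) \circ \alpha = \bigl((s \circ f_1) \vee \cdots \vee (s \circ f_r)\bigr) \circ \alpha,
\]
which coincides with $\kappa\iota([\alpha])^* \bigl( \Psidot{n,n+1}{s}(f_1), \ldots, \Psidot{n,n+1}{s}(f_r) \bigr)$. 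The same identity at the level of morphisms of $\Psidot{n,n+1}{X}$ shows that $\Psidot{n,n+1}{s}$ strictly intertwines $\kappa\iota([\alpha])^*$ on $X$ with its counterpart on $Y$, and the trivial identity $s \circ (g \circ \sigma) = (s \circ g) \circ \sigma$ gives $\Psidot{n,n+1}{s} \circ \dagger = \dagger \circ \Psidot{n,n+1}{s}$.

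Each item of the corollary follows at once. By Theorem \ref{structure thm} the tensor product and dual functors, the unit object, the associator, the unit isomorphisms, the units $\epsilon^{L/R}$ and counits $\eta^{L/R}$, and the braiding $\beta$ and balancing $\tau$ are all images under $\kappa\iota(-)^*$ of fixed objects and morphisms of $\Psidot{n,n+1}{\fatwedge_r \mathbb{S}^n}$ for small $r$, so the commutation established in the previous paragraph means $\Psidot{n,n+1}{s}$ sends each such structure morphism on $X$ to the analogous structure morphism on $Y$ on the nose. In particular the monoidal structure is preserved \emph{strictly}, the duals are strictly preserved, and strict preservation of the braiding immediately yields the symmetric-monoidal statement for $n \geq 3$. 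There is no real obstacle in this argument; the only care needed is the bookkeeping required to recognise each piece of structure from Theorem \ref{structure thm} as lying in the image of $\kappa\iota(-)^*$, which it does by construction.
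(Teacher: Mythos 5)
Your proof is correct and is exactly the paper's argument: the paper's entire proof is the one line ``Composition on the left and right commute,'' and your write-up simply spells out that observation in detail, since $\Psidot{n,n+1}{s}$ is post-composition while every structure map (including $\dagger$) is pre-composition with a fixed map. Nothing further is needed.
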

\begin{proof}
Composition on the left and right commute.
\end{proof}

\section{Thom spaces and stabilisation}
\label{thom}

The transversal homotopy theory of spheres has been our main example. The Pontrjagin--Thom construction relates it to the study of normally-framed submanifolds of $S^n$. Given that one has `fat Thom spaces' $\fatthom(E)$ in $\bsns$ for any vector bundle $E$ this generalises to any structure on the normal bundle. In other words there is an isomorphism
$$
\psidot{n}{\mathbb{M}G_k} \cong \tang{G}{}{k}{n},
$$
where $\mathbb{M}G_k$ is the fat Thom space of the bundle $EG_k \times_{G_k} \R^k \to BG_k$ and $\tang{G}{}{k}{n}$ is the monoid of $G$-tangles of codimension $k$ in dimension  $n$. (In other words it is the monoid  of ambient isotopy classes of codimension $k$ closed submanifolds of $S^n-B^n$ with a $G$-structure on the normal bundle.) Similarly, there is an equivalence
$$
\Psidot{n,n+1}{\mathbb{M}G_k} \cong \tang{G}{}{k}{n,n+1}.
$$
The special case when $G_k=1$ for all $k$ gives the earlier examples for spheres, since in this case $\mathbb{M}G_k \simeq \mathbb{S}^k$. Taking $G_k=SO(k)$ would give oriented isotopy and bordism and so on. To give a specific example, $\Psidot{2,3}{\mathbb{M}SO(2)}$ is (equivalent to) the category of oriented tangles. 

There is a `fat suspension' functor $\fatsusp : \bsns \to \bsns$ and one can check that 
$$
\psidot{n}{\fatsusp^rX} = 
\left\{
\begin{array}{ll}
0 & n<r \\
\pi_r(S^rU) & n=r
\end{array}
\right.
$$
where $U$ is the union of open strata in $X$. Suspension also defines maps (of dagger monoids) $$\psidot{n}{X} \to \psidot{n+1}{\fatsusp X}$$ so we can define the stable transversal homotopy monoids of $X$ to be
$$
\psi_{n}^{\rm S}(X) = \colim_{r\to\infty} \psidot{n+r}{\fatsusp^rX}.
$$
It is not immediately clear if there is an analogue of the Freudenthal suspension theorem in this setting. However, in the special case $X=\mathbb{S}^k$ the geometric interpretation of the transversal homotopy monoids suggests that
$$
\psi_{n}^{\rm S}(\mathbb{S}^k) \cong \colim_{r\to\infty} \psidot{n+r}{\mathbb{S}^{k+r}}
$$
will stabilise to the monoid of diffeomorphism classes of $(n-k)$-manifolds with a stable normal framing at precisely the expected point.

\section{The Tangle Hypothesis}
\label{tangle hypothesis}

When Baez outlined the idea of transversal homotopy theory in \cite{Baez:kx} one motivation was its relation to higher category theory and, in particular, the Tangle Hypothesis. We sketch this proposed relation, warning the reader that everything should be taken as provisional, since most of the objects and structures discussed are yet to be precisely defined.  

Let $\frtang{}{}{k,n+k}$ be the (conjectural) $k$-tuply monoidal $n$-category with duals  whose objects are $0$-dimensional framed submanifolds in $[0,1]^k$, morphisms are $1$-dimensional framed bordisms between such embedded in $[0,1]^{k+1}$, \ldots and whose $n$-morphisms are $n$-dimensional framed bordisms between bordisms between \ldots between bordisms embedded in $[0,1]^{n+k}$ considered up to isotopy. (For consistency with our earlier notation this would be 
$$
\frtang{}{k}{k,n+k}
$$
but, since the codimension is maximal, we drop it from the notation.) There is not yet a precise definition of `$k$-tuply monoidal $n$-category with duals' but see \cite{MR1355899} for the idea. Here are some examples of the structure for small $k$ and $n$ translated into our earlier language. (The need for a unifying terminology for the beasts in this menagerie is apparent!)
$$
\begin{array}{c | c | c | c}
 & n=0 & n=1 & \cdots \\ 
\hline
k=0 & \textrm{Set with involution} & \textrm{Dagger category} & \cdots \\
\hline
k=1 & \textrm{Dagger monoid} & \textrm{Rigid monoidal dagger}\\
&& \textrm{category} & \cdots \\
\hline
k=2 & \textrm{Commutative dagger} & \textrm{Ribbon dagger} &\\
& \textrm{ monoid} & \textrm{category} & \cdots \\
\hline
k=3 & \vdots & \textrm{Rigid commutative monoidal} &\\
&& \textrm{dagger category} &  \cdots \\
\hline
k=4 & \vdots & \vdots & \ddots
\end{array}
$$
The vertical dots indicate that the remaining entries in the column are expected to be the same, \ie that the structure stabilises with increasing $k$. This table is a `with duals' version of the `periodic table of higher category theory' see \cite{MR1355899}.

The Tangle Hypothesis \cite{MR1355899} proposes an algebraic description of $\frtang{}{}{k,n+k}$. Specifically it suggests that it is equivalent (in an appropriate sense) to the free $k$-tuply monoidal $n$-category with duals on one object. See \cite{MR1268782} for a proof in the case when $k=2,n=1$ and \cite{MR2020556} for other references, known cases and related results. See \cite{Lurie:2009fk} for a sketch proof of the Cobordism Hypothesis, the stable version of the Tangle Hypothesis. 

Aside from the elegance of the statement, the importance of the Tangle Hypothesis is that, if true, it provides a standard procedure for defining invariants of framed tangles (and thereby of manifolds, bordisms etc). Suppose $\cat{C}$ is some interesting $k$-tuply monoidal $n$-category with duals. If we fix an object of $\cat{C}$ then the free-ness of $\frtang{}{}{k,n+k}$ guarantees a unique structure-preserving $n$-functor $\frtang{}{}{k,n+k} \to \cat{C}$. The values of this functor, or related quantities, are the invariants. The prototype for this strategy is the Jones polynomial of a link.

Transversal homotopy theory enters the story because one expects to be able to define a $k$-tuply monoidal $n$-category with duals $\Psidot{k,k+n}{X}$ for each Whitney stratified manifold $X$. Furthermore, one expects an equivalence (of $k$-tuply monoidal $n$-categories with duals)
\begin{equation}
\label{tangle eq}
\frtang{}{}{k,n+k} \simeq \Psidot{k,k+n}{\mathbb{S}^k}
\end{equation}
generalising the earlier examples  in \S \ref{ex1} and \S\ref{ex2}.

The proposed definition of $\Psidot{k,n+k}{X}$ is a straightforward generalisation of our earlier definitions. For $0 \leq i < n$ an $i$-morphism in $\Psidot{0,n}{X}$ is a transversal map 
$$
f:[0,1]^i \to X
$$
and an $n$-morphism is an equivalence  class of transversal maps $[0,1]^n\to X$ under the relation generated by homotopy through transversal maps. This ensures that there is a well-defined associative composition for $n$-morphisms given by juxtaposition in the $n$th coordinate direction. Composition of $i$-morphisms for $0< i <n$ is defined analogously by juxtaposition in the $i$th coordinate direction. This will only be associative and unital up to higher morphisms (as expected in an $n$-category). To obtain $\Psidot{k,n+k}{X}$ we choose a generic basepoint $\bpt\in X$ and take the full subcategory of $\Psidot{0,n+k}{X}$ where $i$-morphisms for $0\leq i < k$ are the constant map to $\bpt$. 

When $n=0$ we have $\Psidot{k,k}{X}=\psidot{k}{X}$ and when $n=1$ the above agrees with the definition of transversal homotopy category in \S\ref{transversal homotopy categories}. We've already seen that these have the expected structures of $k$-tuply monoidal $n$-categories with duals, so things work nicely for $n\leq 1$.

Taking the Tangle Hypothesis and (\ref{tangle eq}) on trust, every stratified normal submersion $\mathbb{S}^n \to X$ yields a framed tangle invariant valued in $\Psidot{k,n+k}{X}$. More ambitiously, the equivalence (\ref{tangle eq}) may throw light on the Tangle Hypothesis itself. Certainly $\Psidot{k,n+k}{\mathbb{S}^k}$ should have an appropriate universal property for $k$-tuply monoidal $n$-categories of the form $\Psidot{k,n+k}{X}$.

\appendix

\section{Collapse maps}
\label{collapse maps}

We explain how to construct collapse maps for framed submanifolds satisfying the properties laid out in Lemma \ref{collapse lemma}. The construction is the standard one with a few refinements to obtain the required properties for our purposes. A good general reference for the Pontrjagin--Thom construction is \cite[Chapter 7]{MR1487640}.

 Let $\sigma : \mathbb{S}^k -\{\bpt\} \to \R^k$ be stereographic projection. As usual, for each $n$, we fix a small closed disk-shaped neighbourhood $B^n$ of the basepoint in $S^n$. Suppose  $W$ is a framed codimension $k$ closed submanifold in $S^n-B^n$. Choose a tubular neighbourhood $U$ of $W$ in $S^n-B^n$, a diffeomorphism $\tau : U \cong NW$ and a bundle isomorphism $\phi : NW \cong W \times \R^k$ representing the framing. Define the collapse map
$$
\kappa_W : (S^n,B^n) \to (\mathbb{S}^k,\bpt)
$$
to be a smoothing of the continuous map which is constant with value $\bpt$ on $S^n - U$ and the composite
$$
U \stackrel{\tau}{\longrightarrow} NW  \stackrel{\phi}{\longrightarrow} W \times \R^k \stackrel{\pi}{\longrightarrow} \R^k  \stackrel{\sigma^{-1}}{\longrightarrow} \mathbb{S}^k
$$
on $U$. We can choose this smoothing relative to $S^n-U$ and a closed disk-bundle neighbourhood of $W$ within $U$. It follows that $\kappa_W^{-1}(p)=W$ and $\kappa_W^{-1}(\bpt)\supset B^n$ and that the framing is given by the isomorphism $NW\cong \kappa_W^*T_pS^k$. Furthermore, since submersions are stable, we can choose the smoothing sufficiently small that that the restriction of $\kappa_W$ to the inverse image of $S^k-B^k$ remains submersive. Thus the properties in lemma \ref{collapse lemma} hold.

The construction of $\kappa_W$ depends on choices of tubular neighbourhood, the diffeomorphisms $\tau$ and $\phi$ and the smoothing. Suppose we make two different sets of choices leading to two different collapse maps $\kappa_W$ and $\kappa_W'$. One can construct a collapse map for $W \times [0,1] \subset S^n \times [0,1]$ (with the obvious framing induced from that of $W$) by making choices which agree with the two given ones on the ends $W \times \{0,1\}$. The details, which follow from the essential uniqueness of tubular neighbourhoods and so on, are left to the reader. The resulting collapse map provides a homotopy through transversal maps from $\kappa_W$ to $\kappa_W'$.

Now suppose $W$ and $W'$ are framed submanifolds of $S^n-B^n$ representing the same class in $\frtang{}{k}{n}$. Thus there is a smooth ambient isotopy $\alpha : S^n \times [0,1]\to S^n$ taking $W$ to $W'$ and  fixing $B^n$. The composite
$$
S^n\times [0,1] \stackrel{\alpha}{\longrightarrow} S^n \stackrel{\kappa_W}{\longrightarrow} \mathbb{S}^k
$$
is a homotopy through transversal maps from $\kappa_W$ to a collapse map for $W'$. This establishes the desired uniqueness of collapse maps up to homotopy through transversal maps. 

Finally we show that if $f:(S^n,B^n)\to (\mathbb{S}^k,\bpt)$ is a transversal map then $f$ is homotopic through transversal maps to some collapse map for the framed submanifold $f^{-1}(p)$. To see this let $B_\epsilon(0)$ be the $\epsilon$-ball about the origin in $\R^k$ and let  $U_\epsilon = \sigma^{-1}B_\epsilon(0)$. For sufficiently small $\epsilon >0$, we can choose a diffeomorphism $\varphi$ so that
$$
\xymatrix{
f^{-1}(p) \times U_\epsilon\ar[r]^\varphi \ar[dr]_\pi &  f^{-1}U_\epsilon\ar[d]^f\\
& U_\epsilon
}
$$
commutes where $\pi$ is the projection. It follows that $f$ is smoothly homotopic to a collapse map for the submanifold $f^{-1}(p)$ via
$$
S^n \times [0,1] \stackrel{f \times \id}{\longrightarrow} \mathbb{S}^k \times [0,1] \stackrel{\eta}{\longrightarrow} \mathbb{S}^k
$$
where $\eta$ is a smooth homotopy from the identity to a map which fixes $p$ and contracts $\mathbb{S}^k - U_\epsilon$ to $\bpt$. Furthermore we can ensure that each slice of this homotopy is transversal by insisting that the homotopy fixes a neighbourhood of $p$ point-wise.

\bibliographystyle{plain}
\bibliography{homotopy}
\end{document}